\newcommand{\sym}{\mathrm{sym}}
\newcommand{\intR}{\int_{-\infty}^{\infty}}
\newcommand{\half}{\ensuremath{ \frac{1}{2}}}
\newcommand{\sumeven}{\sideset{}{^\text{ev}}\sum}
\newcommand{\leg}[2]{\left(\frac{#1}{#2}\right)}
\numberwithin{equation}{section}
\newtheorem{theorem}{Theorem}[section]
\newtheorem{proposition}[theorem]{Proposition}
\newtheorem{lemma}[theorem]{Lemma}
\newtheorem{corollary}[theorem]{Corollary}
\newtheorem{conjecture}[theorem]{Conjecture}
\begin{document}

\title{Distribution of mass  of   holomorphic cusp forms}
\author{Valentin Blomer,  Rizwanur Khan and Matthew Young}
\address{Mathematisches Institut, Georg-August Universit\"{a}t G\"{o}ttingen, Bunsenstra\ss e 3-5, D-37073 G\"{o}ttingen, Germany}
\email{blomer@uni-math.gwdg.de, rrkhan@uni-math.gwdg.de}
\address{Department of Mathematics, Texas A\&{}M University, College Station, TX 77843-3368, U.S.A.}
\email{myoung@math.tamu.edu}
\thanks{V.B. supported by the Volkswagen Foundation. V.B. and R.K.  supported   by a Starting Grant of the European Research Council. M.Y. supported by the National Science Foundation under agreement No. DMS-0758235. Any opinions, findings and conclusions or recommendations expressed in this material are those of the authors and do not necessarily reflect the views of the National Science Foundation.}

\keywords{$L^4$-norm, triple product $L$-functions, mass distribution, subconvexity, restriction problems, asymptotic analysis}

\subjclass[2010]{11F11, 11F66}

\begin{abstract} We prove an upper bound for the $L^4$-norm and for the $L^2$-norm restricted to the vertical geodesic of a holomorphic Hecke cusp form $f$ of large weight $k$.  The method is based on Watson's formula and estimating a mean value of certain $L$-functions of degree 6.  Further applications to restriction problems of Siegel modular forms and subconvexity bounds of degree 8 $L$-functions are given.  
\end{abstract}

\maketitle

\section{Introduction} 

Suppose $f \in S_k$ is an $L^2$-normalized cuspidal Hecke eigenform of even weight $k$ for the modular group $\Gamma = SL_2(\Bbb{Z})$.  A basic question is to understand the size of $f$ and the distribution of its mass as $k$ becomes large; more precisely, we consider   $F(z) = y^{k/2} f(z)$ since $|F(z)|$ is $\Gamma$-invariant.  This can be made quantitative in various ways, e.g. by bounding the $L^p$-norm of $F$ for $2 < p \leq \infty$.   
A first guess might be that the mass of $F$ should be nicely distributed on $\Gamma\backslash \Bbb{H}$ such that $F$ has no essential peaks. Indeed, the mass equidistribution distribution conjecture, proved in \cite{HSo}, tells us that  the measure $|F(z)|^2 dx\, dy/y^2$  tends to the uniform measure $(3/\pi) \,dx\, dy/y^2$ (in the sense of integration against continuous and compactly supported test functions) as $F$ runs through a sequence of cuspidal Hecke eigenforms with weight $k$ tending to infinity.  A closer look, however, reveals that $F$ takes large values high in the cusp at $y = k/(4\pi)$, and for $p = \infty$ we have the essentially best-possible result
\begin{equation}
\label{Fsupbound}
   \|F \|_{\infty} = k^{\frac{1}{4}+o(1)},
\end{equation}   
see \cite{X}, which uses Deligne's bound.  A variant of this argument shows
\begin{equation}\label{lower}
  \| F \|_p \gg k^{\frac{1}{4} - \frac{3}{2p} - \varepsilon}
\end{equation}
(which is non-trivial only for $p > 6$), and in the opposite direction we have the interpolation (convexity) bound
\begin{equation}\label{triv}
  \| F \|_p \leq \| F \|_2^{2/p} \| F \|_{\infty}^{1-\frac{2}{p}} \ll k^{\frac{1}{4} - \frac{1}{2p}+\varepsilon}.
\end{equation}
We will give a quick proof of \eqref{lower} in Section \ref{sec3}. In this article we are interested in the $L^4$-norm of $F$ and its connection to $L$-functions.  In this case \eqref{triv} becomes $\| F \|_4^4 \ll k^{1/2 + \varepsilon}$, and nothing better has been known so far. Our first result constitutes an improvement over this trivial bound. 

\begin{theorem}\label{thm1} We have
\begin{displaymath}
 \| F \|_4^4 \ll k^{1/3+\varepsilon}. 
\end{displaymath}
\end{theorem}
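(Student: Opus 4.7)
The plan is to spectrally expand $\|F\|_4^4$, invoke Watson's triple product formula to convert the spectral inner products into central $L$-values, and then reduce matters to a mean value estimate for a family of degree $6$ $L$-functions, as announced in the abstract.

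Since $|F|^2 = y^k |f|^2$ is $\Gamma$-invariant, Parseval on $L^2(\Gamma\backslash\mathbb{H})$ gives
\[
\|F\|_4^4 \;=\; \frac{|\langle |F|^2, 1\rangle|^2}{\mathrm{vol}(\Gamma\backslash\mathbb{H})} \;+\; \sum_{u_j}|\langle |F|^2, u_j\rangle|^2 \;+\; (\text{Eisenstein}),
\]
where $\{u_j\}$ runs over an orthonormal Hecke--Maass cuspidal basis. The constant-function term is $O(1)$ by mass equidistribution \cite{HSo}. Watson's formula then identifies each cuspidal term as
\[
|\langle |F|^2, u_j\rangle|^2 \;=\; \frac{1}{8}\cdot\frac{\Lambda(\tfrac12,\, f \otimes \bar f \otimes u_j)}{\Lambda(1, \sym^2 f)^2\, \Lambda(1, \sym^2 u_j)},
\]
and the factorisation $f \otimes \bar f = 1 \oplus \sym^2 f$ splits the numerator as $L(\tfrac12, u_j)\, L(\tfrac12, \sym^2 f \otimes u_j)$, the product of a degree $2$ and a degree $6$ central value. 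Stirling on the archimedean factor furnishes a smooth weight in $t_j$ essentially supported on $|t_j| \ll k$, with super-polynomial decay beyond, together with the expected polynomial normalisation in $k$. The Eisenstein contribution is structurally identical, with $L(\tfrac12, u_j)$ replaced by $|\zeta(\tfrac12+it)|^2$ and the sum over $u_j$ by an integral over $t\in\mathbb{R}$.

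After these reductions, and using the standard bound $L(1, \sym^2 f) \asymp k^{o(1)}$ together with its analogue on average for $L(1, \sym^2 u_j)$, Theorem~\ref{thm1} reduces to proving a power-saving mean value estimate for
\[
\sum_{|t_j| \ll k} W\!\left(\frac{t_j}{k}\right)\frac{L(\tfrac12, u_j)\, L(\tfrac12, \sym^2 f \otimes u_j)}{L(1, \sym^2 u_j)}
\]
and its Eisenstein analogue. A natural route is Cauchy--Schwarz, detaching the two central $L$-factors: the classical second moment $\sum_{|t_j|\ll k} L(\tfrac12, u_j)^2$ costs $k^{2+\varepsilon}$, and the remaining piece is a degree $12$ mean value $\sum_{|t_j| \ll k} L(\tfrac12, \sym^2 f \otimes u_j)^2$. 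This family has $\sim k^2$ members of analytic conductor $\sim k^2(1+|t_j|)^3$; what is needed is a bound only slightly above the Lindel\"of prediction $k^{2+\varepsilon}$, but still well below convexity. Securing such an estimate uniformly in the weight $k$ is the principal obstacle. I would attack it by opening one central value via an approximate functional equation, averaging over $j$ through the Kuznetsov trace formula, and reducing to shifted convolution sums in the Hecke eigenvalues of $\sym^2 f$; the required saving then emanates from cancellation in those shifted sums.
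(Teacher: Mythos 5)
Your proposal correctly identifies the high-level template (expand $\|F\|_4^4$ spectrally, apply Watson's formula, factor the triple product into a degree $2$ times a degree $6$ central value, and average), but it makes two choices that together steer it into a problem substantially harder than the one the paper solves — and the gap at the end is not just technical.

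First, the decomposition. You expand the $\Gamma$-invariant function $|F|^2=y^k|f|^2$ over the Maass cuspidal and Eisenstein spectrum. The paper instead expands the \emph{holomorphic} form $f^2\in S_{2k}$ over the Hecke basis $B_{2k}$, yielding \eqref{L4normformula}. The introduction (see \eqref{spectral} and the paragraph following it) explicitly contrasts the two decompositions and explains that the Maass route you chose is \emph{harder}: the archimedean weight $\mathcal{G}(k,t)\sim\tfrac{\pi^3}{4}k^{-1}e^{-t^2/k}$ concentrates on $|t_j|\ll\sqrt{k}$, not $|t_j|\ll k$ as you assert, so the family still has about $k$ members, but the degree $6$ factor $L(1/2,\mathrm{sym}^2 f\times u_j)$ has conductor $t_j^2 k^4\ll k^5$, whereas in the holomorphic decomposition it is $k^4$.

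Second, and more seriously, Cauchy--Schwarz is the wrong move. With the correct range $|t_j|\ll\sqrt{k}$, closing your argument would require a second-moment estimate of the rough shape
\[
\sum_{|t_j|\ll\sqrt{k}} L\big(\tfrac12,\mathrm{sym}^2 f\times u_j\big)^2 \ll k^{5/3+\varepsilon},
\]
a \emph{second} moment of a degree $6$ family of conductor $\asymp k^5$ over only $\asymp k$ members. This is essentially Lindel\"of on average for a degree $12$ mean value and is far beyond what Kuznetsov plus shifted convolutions can deliver; it is also strictly harder than the first moment proved in the paper. The paper avoids Cauchy--Schwarz entirely. The key idea you are missing is that both central values in \eqref{L4normformula} are \emph{non-negative} — $L(1/2,g)\geq 0$ by Kohnen--Zagier \cite{KZ}, and $L(1/2,\mathrm{sym}^2 f\times g)\geq 0$ by Lapid \cite{La} — so one may bound the degree $2$ factor \emph{pointwise} by Peng's subconvexity estimate $L(1/2,g)\ll k^{1/3+\varepsilon}$ and pull it out of the sum, leaving the \emph{first} moment
\[
\frac{12}{2k-1}\sum_{g\in B_{2k}}\frac{L(1/2,\mathrm{sym}^2 f\times g)}{L(1,\mathrm{sym}^2 g)}\ll k^{\varepsilon},
\]
which is Theorem \ref{prop31} (combined with $L(1,\mathrm{sym}^2 f)\gg k^{-\varepsilon}$ from Hoffstein--Lockhart). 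Multiplying gives $\|F\|_4^4\ll k^{1/3+\varepsilon}$. This ``positivity lets you substitute a pointwise subconvexity bound on one factor, then average the other'' device is what makes the exponent $1/3$ achievable; the Cauchy--Schwarz route you propose requires a degree $12$ second moment that is well out of reach.
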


Theorem \ref{thm1} shows that the measure of the set where $F$ satisfies \eqref{Fsupbound} is small.  One also immediately obtains an improvement on \eqref{triv} for all $2 < p < \infty$ by interpolation, namely,
\begin{equation*}
\| F \|_p \ll 
\begin{cases}
k^{\frac{2}{3}(\frac14 - \frac{1}{2p}) + \varepsilon}, \quad &\text {if } 2 \leq p \leq 4, \\
k^{\frac14 - \frac{2}{3 p} + \varepsilon}, \quad &\text {if } 4 \leq p < \infty.
\end{cases}
\end{equation*}

One may speculate on what is the true size of the $L^4$-norm.
\begin{conjecture}\label{thm2} On the basis of the conjectures in \cite{CFKRS}, one has with the normalization
\begin{equation}\label{norm}
  \int_{\Gamma \backslash \Bbb{H}} |f(z)|^2 y^k \frac{3}{\pi} \frac{dx \, dy}{y^2}  = 1,
\end{equation}
that as $k \rightarrow \infty$,
\begin{equation}\label{conj}
  \int_{\Gamma \backslash \Bbb{H}} |f(z)|^4 y^{2k} \frac{3}{\pi} \frac{dx \, dy}{y^2} = 2 + o(1).
 \end{equation}
\end{conjecture}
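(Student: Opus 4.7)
The plan is to spectrally expand the mass density $|F|^2$, apply Parseval and Watson's formula to rewrite the fourth moment as a mean value of products of central $L$-values, and then invoke the CFKRS recipe to extract the leading constant.

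First, the $\Gamma$-invariant function $|F(z)|^2 = y^k|f(z)|^2$ admits a spectral decomposition on $\Gamma\backslash\Bbb{H}$ in terms of the constant function, an orthonormal basis $\{u_j\}$ of Hecke--Maass cusp forms with spectral parameter $t_j$, and the unitary Eisenstein series $E(\cdot,\tfrac12+it)$. Under the normalization \eqref{norm} the constant term of this expansion equals $1$, so Parseval yields
$$\int_{\Gamma\backslash\Bbb{H}}|f|^4 y^{2k}\frac{3}{\pi}\frac{dx\,dy}{y^2} = 1 + \sum_j \bigl|\langle |F|^2, u_j\rangle\bigr|^2 + \frac{1}{4\pi}\intR \bigl|\langle |F|^2, E(\cdot, \tfrac12+it)\rangle\bigr|^2\, dt,$$
and the conjecture reduces to showing that the last two terms sum to $1 + o(1)$.

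Next I would apply Watson's explicit formula to each triple product. Since $f \otimes \bar f = \sym^2 f \oplus 1$, the discrete coefficients take the shape
$$\bigl|\langle |F|^2, u_j\rangle\bigr|^2 = \frac{H(k, t_j)}{L(1, \sym^2 f)^2\, L(1, \sym^2 u_j)}\, L\!\left(\tfrac12, \sym^2 f\otimes u_j\right) L\!\left(\tfrac12, u_j\right),$$
where $H(k,t_j)$ is the ratio of archimedean gamma factors. A stationary phase analysis shows that $H(k,t)$ is essentially supported on $|t|\leq k$ and behaves like a smooth bump of total mass of order $1/k$ across this window; the Eisenstein coefficient is analogous, with $L(\tfrac12, \sym^2 f\otimes u_j)L(\tfrac12, u_j)$ replaced by $|L(\tfrac12+it, \sym^2 f)|^2|\zeta(\tfrac12+it)|^2$.

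The core of the argument is to apply the CFKRS recipe to the weighted first moment of the product $L(\tfrac12, \sym^2 f\otimes u_j)L(\tfrac12, u_j)$ as $u_j$ varies over Maass forms with $|t_j|\leq k$, and symmetrically to the Eisenstein mean value. Each factor in the product is self-dual, and the natural Petersson-type weight $1/L(1,\sym^2 u_j)$ places the family in the orthogonal regime, so the recipe produces a main term from the diagonal in the approximate functional equation together with the swap predicted by the functional equations. In the Euler-product bookkeeping one expects the weights $L(1,\sym^2 f)^{-2}$ and $L(1,\sym^2 u_j)^{-1}$ to be cancelled exactly by the local factors generated by the recipe, leaving a clean numerical constant; a parallel CFKRS evaluation of the Eisenstein integral, essentially a hybrid of the conjectured fourth moment of $\zeta$ and the second moment of $L(s,\sym^2 f)$ on the critical line, should contribute the complementary share.

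The principal obstacle is the precise matching of the final constant. One must verify that, after integrating the CFKRS moment polynomial against the archimedean weight $H(k,t)$ and adding the Eisenstein piece, the result equals exactly $1$ rather than some other combination of constants and $\log k$. The random-wave heuristic $\mathbb{E}|F|^4 = 2(\mathbb{E}|F|^2)^2$ for a complex Gaussian field with $\mathbb{E}|F|^2=1$ predicts this answer, and indeed the shape of the orthogonal moment polynomial together with the arithmetic factors at each prime should conspire to reproduce it; verifying this local-to-global agreement is the delicate computation that the statement attributes to the recipe of \cite{CFKRS}.
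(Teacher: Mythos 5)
Your route --- decomposing $y^k|f(z)|^2$ spectrally into the constant, Maass cusp forms, and Eisenstein series, and then applying Watson's formula and the CFKRS recipe via Kuznetsov --- is valid, but it is the \emph{alternate} formulation the paper only sketches in \eqref{spectral}, not the one it actually carries out for Conjecture~\ref{thm2} (it \emph{is}, however, essentially the computation the paper does in detail for the Maass-form Conjecture~\ref{thm2'}). The paper's own derivation of Conjecture~\ref{thm2} instead decomposes the weight-$2k$ holomorphic form $f(z)^2$ into the Hecke basis $B_{2k}$, starts from the identity \eqref{L4normformula}, expands the two $L$-values by formal approximate functional equations with shifts $\alpha,\beta$, applies the Petersson formula \eqref{peter}, and retains only the diagonal; the resulting Dirichlet series factors as $L(1+2\beta,\mathrm{sym}^2 f)\,L(1+\alpha+\beta,\mathrm{sym}^2 f)/\zeta(2+\alpha+3\beta)$, which at $\alpha=\beta=0$ gives $\|F\|_4^4\sim 6/\pi$ and hence $2$ after the $3/\pi$ renormalization. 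The holomorphic route is cleaner because it avoids the continuous Eisenstein integral and yields a single factorizable Euler product, at the cost of obscuring the random-wave interpretation that your route makes transparent (the constant eigenfunction contributing exactly $1$). One point needs correcting in your sketch: the archimedean weight in your decomposition is $\mathcal{G}(k,t)\sim\tfrac{\pi^3}{4}k^{-1}e^{-t^2/k}$, which is concentrated on $|t|\lesssim\sqrt{k}$, not $|t|\leq k$; this is a qualitative difference from the Maass case, where the analogous weight $H_T(t)$ is roughly flat across the full window $|t|\leq 2T$. The narrower window (with the Kuznetsov density $t\,dt$ still giving an effective family of size $\asymp k$) must be handled correctly when the CFKRS moment polynomial is integrated against the archimedean weight; the paper asserts that when this is done the two routes agree.
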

Note that with the normalization \eqref{norm}, Cauchy-Schwarz implies $\| F \|_4 \geq 1$. 

We remark on the side that for an $\infty$-old form $F$ of weight $k$, i.e.\  the ($L^2$-normalized) iterated Maa{\ss} lift $K_{k-2} \cdots K_2K_0 f$ of a fixed weight 0 cusp form $f$, Bernstein and Reznikov \cite[Section 2.6]{BR} have shown the unconditional bound $\| F \|_4 = O(1)$ of almost the same strength as \eqref{conj}, at least for (fixed) co-compact lattices, and together with \cite[Theorem A]{R}  the same bound should hold for (fixed) congruence subgroups\footnote{We would like to thank G. Harcos for pointing this out.}. \\

At first sight, the numerical value in \eqref{conj} is surprising in light of the following variation.
\begin{conjecture}\label{thm2'} Suppose $\phi$ is a Hecke-Maa{\ss} form for the full modular group with spectral parameter $T$.  On the basis of the conjectures in \cite{CFKRS}, one has with the normalization
\begin{displaymath}
  \int_{\Gamma \backslash \Bbb{H}} |\phi(z)|^2 \frac{3}{\pi} \frac{dx \, dy}{y^2}  = 1,
\end{displaymath}
that as $T \rightarrow \infty$,
\begin{equation}
\label{L4Maass}
  \int_{\Gamma \backslash \Bbb{H}} |\phi(z)|^4 \frac{3}{\pi} \frac{dx \, dy}{y^2} = 3 + o(1).
\end{equation}
\end{conjecture}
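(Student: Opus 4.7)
The plan mirrors the strategy behind Conjecture \ref{thm2}: expand $|\phi|^2$ spectrally in $L^2(\Gamma\backslash\mathbb{H}, (3/\pi)\,dx\,dy/y^2)$, apply Watson's triple product formula to each Parseval coefficient, and invoke the CFKRS moment conjectures. Since the chosen measure gives the fundamental domain volume $1$, the constant function is already $L^2$-normalized, and its Parseval coefficient equals $\int |\phi|^2\, d\mu = 1$. Thus
\begin{equation*}
\int |\phi|^4\, d\mu = 1 + \sum_j |\langle |\phi|^2, u_j\rangle|^2 + \frac{1}{4\pi}\int_{-\infty}^{\infty} |\langle |\phi|^2, E_{1/2+it}\rangle|^2\, dt,
\end{equation*}
where $\{u_j\}$ runs over an orthonormal basis of Hecke--Maa{\ss} cusp forms.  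It therefore suffices to show that the cuspidal and Eisenstein contributions each equal $1 + o(1)$.

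For the cuspidal sum, Watson's formula gives
\begin{equation*}
|\langle |\phi|^2, u_j\rangle|^2 = \frac{1}{8}\,\frac{\Lambda(1/2,\, u_j \otimes \phi \otimes \bar\phi)}{\Lambda(1,\, \mathrm{Ad}\,\phi)^2\, \Lambda(1,\, \mathrm{Ad}\, u_j)},
\end{equation*}
and the self-duality of $\phi$ factors the degree-$8$ L-function as $L(s, u_j \otimes \phi \otimes \bar\phi) = L(s, u_j)\,L(s, u_j \otimes \mathrm{Ad}\,\phi)$.  Applying Stirling to the archimedean Gamma factors produces a smooth weight $h(t_j; T)$ essentially supported on $|t_j| \lesssim 2T$, and the resulting sum becomes a harmonic first moment of the degree-$8$ product $L(1/2, u_j)\,L(1/2, u_j \otimes \mathrm{Ad}\,\phi)$ weighted by $L(1, \mathrm{Ad}\, u_j)^{-1}$.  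Evaluation via the CFKRS recipe, combined with the Weyl-law spectral density $\sim T\,dT$ and the $\Lambda(1,\mathrm{Ad}\,\phi)^{-2}$ factor, should yield the predicted value $1 + o(1)$.

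For the Eisenstein part, unfolding $\langle |\phi|^2, E_{1/2+it}\rangle$ against the Eisenstein series via Rankin--Selberg and again using self-duality expresses the inner product, up to archimedean factors, as $\Lambda(1/2+it)\,\Lambda(1/2+it,\, \mathrm{Ad}\,\phi)/\Lambda(1+2it)$.  The Eisenstein contribution is then
\begin{equation*}
\frac{1}{4\pi}\int_{-\infty}^{\infty} \frac{|\zeta(1/2+it)|^2\,|L(1/2+it,\, \mathrm{Ad}\,\phi)|^2}{|\zeta(1+2it)|^2}\, g(t; T)\, dt
\end{equation*}
for an explicit archimedean weight $g(t; T)$ coming from Stirling.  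Inserting the CFKRS mixed second-moment prediction for $|\zeta(1/2+it)|^2\,|L(1/2+it, \mathrm{Ad}\,\phi)|^2$ yields $1 + o(1)$, and the three contributions sum to $3$.

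The main obstacle is bookkeeping in the cuspidal step: the CFKRS recipe for this degree-$8$ first moment involves many shift and swap terms, and one must verify that all $\Lambda(1, \mathrm{Ad}\,\phi)$ factors cancel against those produced implicitly by the CFKRS arithmetic factors, and that the powers of $2$ and $\pi$ from Watson's formula combine with the archimedean integrals to leave exactly $1$ in the limit.  A conceptual organization via the adelic Ichino--Ikeda framework and a careful stationary-phase analysis of the weights $h(\cdot; T)$, $g(\cdot; T)$ should make the cancellations transparent; the discrepancy between the values $2$ in \eqref{conj} and $3$ in \eqref{L4Maass} then reflects the range on which the corresponding weight functions in the Maa{\ss} and holomorphic cases concentrate.
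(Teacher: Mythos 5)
Your framework is sound — spectral decomposition of $|\phi|^2$, Watson's formula, factorization $L(s,\phi\times\bar\phi\times u_j)=L(s,u_j)L(s,\text{sym}^2\phi\times u_j)$, and the CFKRS recipe — and is essentially the paper's approach. The constant contributes $1$ under the probability measure, exactly as you say. But your claimed split of the remaining $2$ into ``cuspidal $= 1 + o(1)$ and Eisenstein $= 1 + o(1)$'' is wrong, and this is the part you explicitly left to ``bookkeeping.'' The correct split is $3 = 1 + (2 + o(1)) + o(1)$: the cuspidal sum carries essentially all of the remaining mass, while the continuous spectrum contributes $O(T^{-1+\varepsilon})$.

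The reason is the mismatch in spectral density. Stirling applied to the archimedean factors in Watson's formula gives a weight $H_T(t) \sim 2\pi\,|t/2|^{-1}|T^2-t^2/4|^{-1/2}$ for $|t|\leq 2T$ (and exponential decay beyond), and this same weight appears for both the cuspidal and the Eisenstein pieces. For the cuspidal sum, the Kuznetsov formula's diagonal term carries the measure $d^*t = \tfrac{1}{\pi^2}\,t\tanh(\pi t)\,dt$ reflecting the Weyl law, and the factor $t$ exactly cancels the $|t/2|^{-1}$ in $H_T$; one then computes $\int H_T(t)\,d^*t = 8 + O(T^{-1/2})$, giving the value $2$ after normalization. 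The Eisenstein integral $\tfrac{1}{4\pi}\int H_T(t)\,|\zeta(\tfrac12+it)|^2|L(\tfrac12+it,\text{sym}^2\phi)|^2\,|\zeta(1+2it)|^{-2}\,dt$ has no compensating factor of $t$; since $H_T(t) \asymp |t|^{-1}T^{-1}$ for $1\ll |t|\ll T$, this integral is $\ll T^{-1}(\log T)^{O(1)}$ and so tends to zero. So whatever CFKRS mixed-moment asymptotic you insert into the Eisenstein integral, the archimedean weight kills it; you would not get $1$.

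One organizational remark: the paper does not actually separate cuspidal and Eisenstein. After inserting approximate functional equations, the two pieces are the two halves of the spectral side of the Kuznetsov formula; the CFKRS recipe amounts to replacing that combined spectral side by the Kuznetsov diagonal $\tfrac12\delta_{m=n}\int H_T\,d^*t$, which yields the $2$ in one stroke. Trying to evaluate the two halves separately, as you propose, is possible but extra work. Your final remark attributing the discrepancy between $2$ and $3$ to ``the range on which the corresponding weight functions concentrate'' is also not the mechanism: the cuspidal-plus-Eisenstein block gives the same value $2$ in both the holomorphic and Maa{\ss} settings, and the extra $1$ in the Maa{\ss} case comes precisely from the constant eigenfunction $u_0=\sqrt{3/\pi}$, which has no analogue in the holomorphic decomposition of $f^2\in S_{2k}$. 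The probabilistic heuristic in the paper (real versus complex Gaussian) is the conceptual explanation.
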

Conjecture \ref{thm2'} has been folklore for a while, see e.g.\ \cite[p.\ 989]{KR} and the discussion in \cite[\S 4]{Sa}.  Since  the fourth moment of a normalized Gaussian random variable is 3, it is  consistent with the random wave model of M.\ Berry \cite{Be}, and some numerical evidence is given, for instance, in \cite{HR, HSt}. Based on the usual analogy between large weight holomorphic cusp forms and Maa{\ss} forms, one might have expected the answer of $3$ in both conjectures, but as P.\ Sarnak pointed out to us, Conjecture \ref{thm2} indicates that $f(z) y^{k/2}$ is modelled by a \emph{complex} Gaussian for which the normalized fourth moment is 2.  One should keep in mind, however, that by \eqref{lower} this analogy ends certainly with the eighth moment which is not bounded any more.

Although \eqref{conj} and \eqref{L4Maass} look very pleasant using   probability measure, we nevertheless follow the usual convention in the literature and use $\frac{dx dy}{y^2}$ since this aids us in quoting results.  \\

One may ask the question of bounding $L^4$-norms in terms of other parameters of automorphic forms. Sarnak and Watson  \cite[Theorem 3] {Sa} can show the bound
  $\| f \|_4 \ll \lambda^{\varepsilon}$ 
for a weight $0$ Hecke-Maa{\ss} cusp form of large eigenvalue $\lambda$, possibly assuming the Ramanujan-Petersson conjecture (see also \cite{Lu2}).  For Eisenstein series restricted to fixed compact regions within $\Gamma \backslash \Bbb{H}$ this has been shown by Spinu \cite{Sp}. In the level aspect, a best-possible result on average has been proved in \cite{Bl}. All these results have Watson's formula \cite{Wa2} as a starting point that  translates the $L^4$-norm into a mean value of certain triple product $L$-functions of degree 8, but  they are of very different levels of difficulty. The present case of the weight aspect is the hardest in terms of the size of the conductors of the relevant $L$-functions. Here Watson's formula gives roughly
\begin{equation}\label{wat}
  \| F \|_4^4 \approx  \frac{1}{k} \sum_{g \in B_{2k} }L(1/2, f \times f \times g)
\end{equation}
where here and henceforth $B_k$ denotes a  Hecke basis of $S_k$. 
This is a family of about $k$ $L$-functions having conductors of size about $k^6$. The Lindel\"of hypothesis would imply $\| F \|_4^4 \ll k^{\varepsilon}$, but unconditionally   a bound of this strength seems to be completely out of reach by present technology.  Using the factorization 
\begin{equation}
\label{factorization}
  L(1/2, f \times f \times g) = L(1/2, \mathrm{sym}^2f \times g) L(1/2, g)
\end{equation}
and non-negativity of central $L$-values \cite{KZ, La}, one can estimate the second factor individually by $k^{1/3+\varepsilon}$, the best known subconvexity bounds for this degree 2 $L$-function \cite{Pe}, 
and is left with an average of degree 6 $L$-functions of conductor $k^4$ in a family of size $k$. Here we are in a position to obtain   a best-possible upper bound (``Lindel\"of on average") which is of independent interest. The following result is slightly more general than needed for our applications. 

\begin{theorem}\label{prop31} Fix a constant $c > 0 $.  For $f \in B_k$ and $|\kappa - k| \leq c$ we have  \begin{displaymath}
  \frac{12}{2k-1} \sum_{g \in B_{2\kappa}}    \frac{    L(1/2, \mathrm{sym}^2f  \times g)  }{  L(1, \mathrm{sym}^2 g)} \ll  k^{\varepsilon}.
\end{displaymath}
The implicit constant depends only on $\varepsilon$ and $c$. 
\end{theorem}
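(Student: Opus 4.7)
The plan is to apply the approximate functional equation to $L(1/2, \mathrm{sym}^2f \times g)$ and then use the Petersson trace formula in weight $2\kappa$.

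Since $L(s, \mathrm{sym}^2f \times g)$ has degree $6$ and analytic conductor $\asymp k^{4}$, the approximate functional equation yields
\[
L(1/2, \mathrm{sym}^2f \times g) \approx \sum_{N \lesssim k^{2}} \frac{c_g(N)}{\sqrt{N}}\,V\!\left(\frac{N}{k^{2}}\right),
\]
with a smooth cutoff $V$ depending only on the (fixed) weights $k, 2\kappa$. The Dirichlet coefficients decompose via the Cauchy identity for $\mathrm{GL}(3) \times \mathrm{GL}(2)$ Rankin--Selberg series as
\[
c_g(N) = \sum_{r s^{2} = N} A(r, s)\, \lambda_g(r),
\]
where $A(r, s)$ are the Fourier coefficients of $\mathrm{sym}^2 f$ (with $A(1, s) = A(s, 1) = \lambda_{\mathrm{sym}^2 f}(s)$ by self-duality).

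Up to an absolute constant, $12/((2k-1) L(1, \mathrm{sym}^2 g))$ is the standard Petersson harmonic weight in $B_{2\kappa}$, so the Petersson trace formula gives
\[
\frac{12}{2k-1}\sum_{g \in B_{2\kappa}} \frac{\lambda_g(r)}{L(1, \mathrm{sym}^2 g)} = c\,\delta_{r,1} + c \cdot 2\pi i^{-2\kappa} \sum_{q \geq 1} \frac{S(r, 1; q)}{q}\, J_{2\kappa - 1}\!\left(\frac{4\pi\sqrt{r}}{q}\right)
\]
for a constant $c$. The diagonal contribution ($r = 1$) forces $N = s^{2}$ and reduces to the smoothly-truncated partial sum $\sum_{s \lesssim k} \lambda_{\mathrm{sym}^2 f}(s)/s$, a smoothed evaluation of $L(1, \mathrm{sym}^2 f)$, which is $O((\log k)^{C}) = O(k^{\varepsilon})$ by Hoffstein--Lockhart.

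The off-diagonal is the main obstacle. The weight-aspect behaviour of $J_{2\kappa - 1}$ is favourable: since $J_{2\kappa - 1}(x)$ is negligible for $x \ll 2\kappa$, the factor $J_{2\kappa-1}(4\pi\sqrt{r}/q)$ forces $q \lesssim \sqrt{r}/k \leq k^{\varepsilon/2}$, restricting $q$ to polylogarithmically-many values. However, a direct Cauchy--Schwarz using the Rankin--Selberg bound $\sum_{r \leq R}|A(r, s)|^{2} \ll R\,d(s)^{\varepsilon}$ together with the $L^{2}$-bound $|J_\nu(x)| \ll \min(\nu^{-1/3}, x^{-1/2})$ recovers only $O(k^{1/2 + \varepsilon})$, so additional cancellation in the $r$-sum is essential. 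The plan is to extract it via $\mathrm{GL}(3)$ Voronoi summation on the $r$-variable: after opening $S(r, 1; q)$ into additive characters and carrying the Bessel factor as an oscillatory weight, Voronoi dualises the $r$-sum to one of very short effective length, and a stationary-phase analysis of the resulting Hankel-type integral transform (using the uniform Langer--Debye asymptotics of $J_{\nu}$ in both the transition and the classical regime) then yields the required $O(k^{\varepsilon})$ bound.
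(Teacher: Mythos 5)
Your skeleton matches the paper's: approximate functional equation, Petersson in weight $2\kappa$, $GL(3)$ Voronoi on the long (length $\asymp k^{2}$) variable, and a stationary-phase analysis of the resulting integral transform. The diagonal and the truncation of the Kloosterman modulus $q$ to $O(k^{\varepsilon})$ are also handled correctly in spirit (though your inequality $q \lesssim \sqrt{r}/k \leq k^{\varepsilon/2}$ is stated loosely; what the Bessel decay actually enforces is that only $r \gg (kq)^{2}$ survives, which combined with $r \lesssim k^{2}/s^{2}$ pins $qs$ to $O(k^{\varepsilon})$). A minor slip: $L(1,\mathrm{sym}^2 f) \ll (\log k)^{C}$ is not Hoffstein--Lockhart (that is the \emph{lower} bound); the upper bound is elementary from Deligne.

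However, there is a genuine gap in the last step, and it is precisely the heart of the matter. You propose to analyze the post-Voronoi Hankel-type transform of $J_{2\kappa-1}(4\pi\sqrt{n}/q)\,\Omega(n/N)$ by uniform (Langer--Debye) asymptotics of $J_{\nu}$ and stationary phase, and assert this ``yields the required $O(k^{\varepsilon})$.'' But this transform involves the $GL(3)$ Bessel kernel for holomorphic $\mathrm{sym}^2 f$, whose weight-$k$ oscillation is comparable in magnitude to that of $J_{2\kappa-1}$; a priori the two phases could combine into something of the same strength, in which case stationary phase gives no saving beyond what you already concede (only $k^{1/2+\varepsilon}$). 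The paper's central observation, which your proposal does not identify, is that in Mellin space the $GL(3)$ kernel contributes the ratio $\Gamma(k-\frac12+it)/\Gamma(k-\frac12-it)$ while the Mellin transform of $J_{2\kappa-1}$ contributes $\Gamma(\kappa-it-iw)/\Gamma(\kappa+it+iw)$, and these two phases \emph{nearly cancel} precisely because $\kappa = k + O(1)$ (cf.\ Lemma~\ref{lem32} and the factor $H_k(t,w)$). Only after this cancellation is the residual phase $t\log(|t|/8e)$ from lower-order Gamma factors, which is mild enough that stationary phase gives square-root cancellation and the final $k^{\varepsilon}$. Without articulating this cancellation you have no justification that the transform is short or tame, and the argument would not close. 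In particular, this is also where the hypothesis $|\kappa-k| \leq c$ enters crucially, which your proposal never uses.
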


This is  the main ``workhorse'' result of the paper that is used in the course of proving Theorems \ref{thm1}, \ref{thmSK}, and \ref{cor2}. We will only need the cases $\kappa = k$ even  and $\kappa = k-1$ odd which come up naturally in our period formulae \eqref{L4normformula} and \eqref{ichino} below, but the argument works in greater generality as long as $k$ and $\kappa$ are sufficiently close (see below for a more detailed discussion). We note that Theorem \ref{prop31} is trivial in the case
$\kappa \geq k$, $\kappa$ odd, and in the case $k < \kappa$, $\kappa$ even, since in these cases  the root number of $L(s, \mathrm{sym}^2f \times g)$ is $-1$. \\

The factorization \eqref{factorization} together with a subconvexity bound for $L(1/2, g)$ gives trivially a subconvexity bound for the degree 8 function on the left hand side of \eqref{factorization}. Based on Theorem \ref{prop31} we can  get a subconvexity bound for a degree 8 $L$-function in a much less obvious situation. This seems to be the first instance of subconvexity for a triple product $L$-function with \emph{three varying} factors.

\begin{corollary}\label{subconvex} Let $k, l$ be two even positive integers and let $f \in B_k$, $h \in B_l$ be two Hecke eigenforms. Then
\begin{displaymath}
 \frac{12}{2(k+l)-1}\sum_{g \in B_{k+l}} \frac{L(1/2, f \times g \times h)}{L(1, \mathrm{sym}^2 g)} \ll  (kl)^{1/6+\varepsilon}.
\end{displaymath}
In particular, $L(1/2, f \times g \times h) \ll (k+l)(kl)^{1/6+\varepsilon}$ for each $g \in B_{k+l}$. 
\end{corollary}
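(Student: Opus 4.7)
The plan is to reduce the left-hand side to a spectral second moment of triple-product periods via Ichino's formula \eqref{ichino}, then apply Parseval's identity together with the $L^4$-norm bound of Theorem \ref{thm1}. By \eqref{ichino} applied to the weight configuration $(k, k+l, l)$, each ratio in the sum should be expressible in the schematic form
\begin{equation*}
\frac{L(1/2, f \times g \times h)}{L(1, \mathrm{sym}^2 g)} = c_{k,l} \cdot L(1, \mathrm{sym}^2 f)\, L(1, \mathrm{sym}^2 h) \cdot |\langle fh, g\rangle|^2,
\end{equation*}
where $\langle \cdot, \cdot\rangle$ denotes the Petersson inner product and $c_{k,l}$ is an archimedean constant dictated by the local integral at infinity.

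First I would observe that $fh$ is a genuine holomorphic cusp form in $S_{k+l}$ (it vanishes at the cusp, to higher order than either factor), so Parseval's identity for the orthonormal Hecke basis $B_{k+l}$ of $S_{k+l}$ gives
\begin{equation*}
\sum_{g \in B_{k+l}} |\langle fh, g\rangle|^2 = \|fh\|_2^2 = \int_{\Gamma \backslash \mathbb{H}} |F(z)|^2 |H(z)|^2 \frac{dx\,dy}{y^2},
\end{equation*}
with $F = y^{k/2} f$ and $H = y^{l/2} h$. Next I would apply the Cauchy--Schwarz inequality to factor the integrand and invoke Theorem \ref{thm1} on each factor:
\begin{equation*}
\|fh\|_2^2 \leq \|F\|_4^2\, \|H\|_4^2 \ll k^{1/6+\varepsilon}\, l^{1/6+\varepsilon} = (kl)^{1/6+\varepsilon}.
\end{equation*}
Combining this with the standard bounds $L(1, \mathrm{sym}^2 f),\, L(1, \mathrm{sym}^2 h) \ll (kl)^\varepsilon$ and the correct asymptotic size of $c_{k,l}$ (which must absorb the normalization factor $12/(2(k+l)-1)$) delivers the claimed mean bound.

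The pointwise assertion then follows from the non-negativity of $L(1/2, f \times g \times h)$ (Lapid): since every term in the sum is non-negative, one can estimate a single term by the full sum. Multiplying by $(2(k+l)-1)/12 \ll k + l$ and using $L(1, \mathrm{sym}^2 g) \ll (k+l)^\varepsilon$ yields $L(1/2, f \times g \times h) \ll (k+l)(kl)^{1/6+\varepsilon}$.

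The main obstacle is the precise bookkeeping of the archimedean constant $c_{k,l}$ in the weight configuration $(k, k+l, l)$, which sits at the boundary of the admissible range for Ichino's formula. This is essentially the same archimedean calculation that underlies \eqref{L4normformula} in the proof of Theorem \ref{thm1} (the case $k = l$), so the asymptotics of the relevant $\Gamma$-factors are accessible by standard stationary phase analysis, and one expects $c_{k,l}$ to scale precisely so as to cancel the normalization prefactor up to $(kl)^\varepsilon$.
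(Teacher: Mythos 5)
Your proof is correct and follows the paper's route: apply the triple-product period formula, recognize the sum over $g$ as $\|FH\|_2^2 = \langle |F|^2, |H|^2\rangle$ by Parseval, bound this by $\|F\|_4^2\|H\|_4^2$ via Cauchy--Schwarz, and invoke Theorem \ref{thm1} (with non-negativity of central values for the pointwise assertion). One correction on the references: the relevant period identity is Watson's formula \eqref{watson}, realized explicitly as \eqref{product}, rather than \eqref{ichino}, which concerns pullbacks of Saito--Kurokawa lifts; in particular the archimedean constant you flag as the main obstacle at the end is already worked out in \eqref{watson} as $\pi^3/(2(k+l-1))$, which cancels the prefactor $12/(2(k+l)-1)$ up to an absolute constant, so no further local computation is needed.
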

The convexity bound in this situation is $((k+l)kl)^{1/2}$, so Corollary \ref{subconvex} gives  subconvexity in the range $k^{1/2+\delta} \leq l \leq k^{2-\delta}$. \\

The proof of Theorem \ref{prop31} is based on a careful study of the integral kernel in the $GL(3)$ Voronoi summation formula. It turns out that we roughly need to sum  
\begin{equation}\label{expr}
   \sum_{n \asymp  k^2} \lambda_f(n^2) J_{2\kappa-1}(\sqrt{n})
\end{equation}
where here and henceforth $\lambda_f$ denote the Hecke eigenvalues of $f$.  The Bessel function comes from Petersson's formula applied to the sum over $g \in B_{2\kappa}$. The key observation is that  large parts of the Voronoi kernel  are essentially cancelled by the Mellin transform of the Bessel function, and hence the seemingly complicated expression \eqref{expr}  with the Bessel function in the transitional region becomes treatable, cf.\ Lemma \ref{lem32}.  It is at this point that we need $k \approx \kappa $ in Theorem \ref{prop31}. A somewhat similar phenomenon was (implicitly) the key of success in X.\ ~Li's work \cite{Li}. The endgame of the proof features a stationary phase argument. For the purpose of this paper we could get by with an ad hoc argument, but a uniform analysis of oscillating integrals is a recurring theme in analytic number theory, and we felt that a general result in this direction may be welcome in many other situations. We give a weighted stationary phase lemma in Proposition \ref{statphase} below. It gives an asymptotic expansion with arbitrary precision, and it is also applicable in situations with several stationary points that move against each other, or in the case of mildly oscillating weight functions. \\

Theorem \ref{prop31} can be used in many situations, and we proceed to give two applications connected with norms of automorphic forms restricted to certain submanifolds.

For a holomorphic cuspidal Hecke eigenform $g \in S_{2k}$ with $k$ odd let $F_g \in S_{k+1}(\text{Sp}_4(\Bbb{Z}))$ be its Saito-Kurokawa lift (see \cite{EZ}).  Then $F_g$ restricted to the diagonal is a modular form on $(\Gamma\backslash \Bbb{H}) \times (\Gamma \backslash \Bbb{H})$, and we denote by $N(F_g)$ the (square of the) $L^2$-norm of this restricted function, when  both $\Gamma\backslash \Bbb{H}$ and $\text{Sp}_4(\Bbb{Z})\backslash \mathcal{H}_2$ are equipped with probability measures.  Ichino's formula \cite{Ic} implies  
\begin{equation}\label{ichino}
  N(F_g) = \frac{\pi^2}{15 \, L(3/2, g) L(1, \mathrm{sym}^2 g)}\cdot  \frac{12}{k} \sum_{f \in B_{k+1}} L\big(\tfrac{1}{2}, \mathrm{sym}^2 f \times g\big).
\end{equation}
It was conjectured in \cite{LY} that $N(F_g) \sim 2$ as $k \rightarrow \infty$, and  this conjecture was shown on average over both $g \in B_{2k}$ and $K \leq k \leq  2K$. Here we  show that the expected asymptotic formula holds for a much smaller average \emph{only} over $g \in B_{2k}$. 
\begin{theorem}\label{thmSK} We have
\begin{displaymath}
 \frac{12}{2k-1} \sum_{g \in B_{2k}} N(F_g) = 2 + O(k^{-\eta})
\end{displaymath}
for some $\eta > 0$. 
\end{theorem}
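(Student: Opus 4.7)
The plan is to start from Ichino's formula \eqref{ichino} and swap the order of summation. Substituting and rearranging,
$$\frac{12}{2k-1}\sum_{g \in B_{2k}} N(F_g) = \frac{12\pi^2}{15k}\sum_{f \in B_{k+1}} \left[\frac{12}{2k-1}\sum_{g \in B_{2k}} \frac{L(1/2, \mathrm{sym}^2 f \times g)}{L(3/2, g)\, L(1, \mathrm{sym}^2 g)}\right].$$
The bracketed inner average is, up to the factor $1/L(3/2, g)$, precisely the quantity bounded in Theorem \ref{prop31} (with $\kappa=k$, weight $k+1$, so $|\kappa-(k+1)|=1$). Since $3/2>1$, the Dirichlet series $1/L(3/2, g) = \sum_m \lambda_g^{(-1)}(m)\,m^{-3/2}$ converges absolutely with uniform control on its tail; the first step is to truncate it at $m \leq k^\varepsilon$ with negligible error, reducing the problem to a finite linear combination of averages of $L(1/2,\mathrm{sym}^2 f \times g)\lambda_g^{(-1)}(m)/L(1,\mathrm{sym}^2 g)$.

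Next, I would apply an approximate functional equation of effective length $\sim k^2$ to $L(1/2, \mathrm{sym}^2 f \times g)$ and invoke Petersson's trace formula on the resulting $g$-sum, noting that $\frac{12}{2k-1}\cdot\frac{1}{L(1,\mathrm{sym}^2 g)}$ is exactly the natural harmonic weight on $B_{2k}$. The Petersson diagonal, after recombining Hecke eigenvalues via the multiplicativity $\lambda_g(n)\lambda_g^{(-1)}(m)=\sum_d(\cdots)\lambda_g(\cdots)$ and summing over $f$, should contribute the main term; a direct calculation parallel to that of \cite{LY} and consistent with \cite{CFKRS} yields $2 + O(k^{-\eta_1})$ for some $\eta_1>0$. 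The constant $2$ arises from the $n=m=1$ diagonal contribution of the combined AFE/Dirichlet expansion, multiplied by the archimedean factor $12\pi^2/15$ in Ichino's formula and the normalization $12/k$ from the inner $f$-average.

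The principal task is then to bound the Petersson off-diagonal, which consists of Kloosterman sums weighted by Bessel functions $J_{2k-1}$; this is precisely the term that forced \cite{LY} to average over $k$ in order to exploit integral cancellation among the oscillating Bessel functions. Our strategy is to package the off-diagonal back with the outer $f$-sum and recognize it as a dual form of the average treated in Theorem \ref{prop31}, which provides the needed Lindel\"of-on-average control at fixed $k$. The main obstacle is converting the $k^\varepsilon$ bound of Theorem \ref{prop31} into the polynomial saving $k^{-\eta}$ required here; I expect to achieve this by splitting the AFE at a parameter $X \ll k^{2-\delta}$, gaining power savings on the short portion from the small length of the Dirichlet polynomial and on the long portion from the decay of the AFE weight combined with Theorem \ref{prop31}.
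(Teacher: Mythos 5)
Your high-level plan matches the paper's: start from Ichino's formula, swap so the $f$-average is outermost, expand $1/L(3/2,g)$ into its Dirichlet series, insert an approximate functional equation, and run Petersson on the $g$-sum; you also correctly identify the off-diagonal of that $g$-Petersson as the obstruction that forced \cite{LY} to average over $k$. But your sketch has two genuine gaps. The first concerns the main term. After the $g$-Petersson the remaining $f$-average is the \emph{flat} average $\tfrac{12}{k}\sum_{f \in B_{k+1}}$, which does not carry the harmonic weight $1/L(1,\mathrm{sym}^2 f)$ that Petersson produces on the $f$-side. The paper resolves this by writing $1 = L(1,\mathrm{sym}^2 f)/L(1,\mathrm{sym}^2 f)$ and replacing the numerator by a short Dirichlet polynomial in $\lambda_f(d_1^2)$, valid for all but $O(k^{\delta_2})$ of the forms via the Lau--Wu zero-density theorem (Lemma \ref{lem51}); only then can Petersson be applied a second time on $f$, with the diagonal producing the exact constant $2$ (via Mellin inversion and contour shifting) and the $f$-off-diagonal controlled by Bessel bounds. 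This ingredient is entirely absent from your proposal, and ``a direct calculation parallel to \cite{LY}'' does not supply it, since \cite{LY} summed over $f$ first and exploited an extra average over the weight.

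The second gap is your mechanism for the power saving. You propose splitting the AFE at $X \ll k^{2-\delta}$ and relying on ``the decay of the AFE weight'' on the long portion, but $W$ satisfies $x^j W^{(j)}(x) \ll (1+x/k^2)^{-A}$, so it is essentially constant for $x \leq k^2$ and decays only once $x$ exceeds $k^{2+\varepsilon}$. For $X = k^{2-\delta}$ the dominant range $k^{2-\delta} \leq nm^2 \leq k^{2}$ sits in the ``long portion'' with weight $\asymp 1$, and Theorem \ref{prop31} as stated only bounds the full $L$-value average, not a truncated piece of its Dirichlet series; there is no positivity argument that relates the two. The paper instead obtains the saving by running the Voronoi estimates of Lemmas \ref{lem32}--\ref{lem33} through the off-diagonal $\mathcal{M}^{(2)}_f(r)$ and then applying the second ($f$-side) Petersson formula trivially: after Voronoi the variables $m,c,n_1,n_2$ are confined to small polynomial ranges by \eqref{ranges} and \eqref{ranges1}, so the diagonal of the second Petersson is sparse and the off-diagonal, bounded by $|S(\ast,\ast,c)| \leq c$ and $J_k(x) \ll k^{-1/3}$, contributes $\ll k^{-1/3 + O(\delta)}$, yielding the claimed $O(k^{-\eta})$.
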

Dropping all but one term gives the bound $N(F_g) \ll k$ which is slightly  better  than the strongest individual bound obtained in \cite{LY}. With an amplifier one might even get a small power saving but we did not investigate this.  In this context Theorem \ref{prop31} has a geometric interpretation: the projection of the diagonally restricted $F_g$ onto any $f\times f$ with $f \in B_{k+1}$ is essentially bounded on average over  lifts $F_g$. It would be very interesting to prove a \emph{lower} bound in Theorem \ref{prop31} since this would show that for a given $f$, it is not the case that the projection of $F_g$ onto $f \times f$ is zero for all $g$.\\

As another application we let $f \in B_k$ be an $L^2$-normalized cuspidal Hecke eigenform and write as before $F(z) = f(z) y^{k/2}$. We consider the restriction of $F$ to the distinguished vertical \emph{infinite length} geodesic:
\begin{equation}\label{real}
  \mathcal{I} := \int_0^{\infty} |F(iy)|^2 \frac{dy}{y} = \int_0^{\infty} f(iy)^2 y^k \frac{dy}{y}.
\end{equation}
It follows easily from Parseval  that this   integral can be expressed in terms of $L$-functions (this is a classical observation of Hecke; a quick derivation is given in Section \ref{sec6}):
\begin{equation}\label{period}
\begin{split}
  \mathcal{I} =   \int_{-\infty}^{\infty} \frac{2^{k-2} |\Gamma(\frac{k}{2} + it)|^2} {\Gamma(k)} \cdot  \frac{|L(1/2 + it, f)|^2}{L(1, \mathrm{sym}^2 f)} dt  \sim \left(\frac{\pi}{2k}\right)^{1/2}  \int_{-\infty}^{\infty} e^{-2t^2/k} \frac{|L(1/2+it, f)|^2}{  L(1, \mathrm{sym}^2 f)} dt.
 \end{split} 
\end{equation}
One can show in various ways $\mathcal{I} \ll k^{1/2+\varepsilon}$, either by using \eqref{Fsupbound} or alternatively by  a mean value theorem for Dirichlet polynomials, while the Lindel\"of hypothesis would predict that $\mathcal{I} \ll k^{\varepsilon}$. 

The situation is once again in sharp contrast to the non-holomorphic case: the mean value theorem argument applied to 
  $\mathcal{J} := \int_0^{\infty} |\phi(iy)|^2 dy/y$ 
for an $L^2$-normalized Hecke-Maa{\ss} cusp form $\phi$ with large Laplace eigenvalue $1/4 + T^2$ shows immediately the essentially best-possible bound $\mathcal{J} \ll T^{\varepsilon}$, see \cite[p.6]{Sarnak}. 

We will conclude from Theorem \ref{prop31}   the following improvement on the trivial bound in the holomorphic case.
\begin{theorem}\label{cor2} We have
\begin{displaymath}
  \mathcal{I} \ll k^{1/4+\varepsilon}. 
\end{displaymath}
\end{theorem}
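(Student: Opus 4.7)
Starting from \eqref{period}, Stirling's formula gives
$$\frac{2^{k-2}|\Gamma(k/2+it)|^2}{\Gamma(k)} \sim \sqrt{\pi/(2k)}\, e^{-2t^2/k},$$
so
$$\mathcal{I} \ll \frac{1}{\sqrt{k}\,L(1,\mathrm{sym}^2 f)} \int_{-\infty}^{\infty} e^{-2t^2/k}\,|L(1/2+it,f)|^2\,dt.$$
The Gaussian effectively restricts $|t| \leq T$ with $T = \sqrt{k}\,k^{\varepsilon/2}$, so it suffices to show
$$\int_{|t| \leq T} |L(1/2+it,f)|^2\,dt \ll k^{3/4+\varepsilon}\,L(1,\mathrm{sym}^2 f).$$
The trivial mean-value estimate for this integral, treating the Dirichlet polynomial of length $\asymp k$ from the approximate functional equation for $L(1/2+it,f)$, is $\ll k\,L(1,\mathrm{sym}^2 f)$ and recovers only $\mathcal{I} \ll k^{1/2+\varepsilon}$; the desired bound requires saving a factor $k^{1/4}$ from the off-diagonal.

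I would next apply the approximate functional equation to $L(1/2+it,f)$, square, and integrate against the Gaussian weight. Since $\int e^{it\log(m/n)} e^{-2t^2/k}\,dt \asymp \sqrt{k}\,e^{-(k/8)\log^2(m/n)}$, the $t$-integral restricts to the diagonal $m = n$ (contributing $\sqrt{k}\log k \cdot L(1,\mathrm{sym}^2 f)$, which is acceptable by Rankin--Selberg) and to off-diagonal shifts $m = n+h$ with $|h| \ll n k^{-1/2+\varepsilon}$. The off-diagonal takes the form
$$\mathrm{OD} \asymp \sqrt{k}\sum_{0 < |h| \ll \sqrt{k}\,k^{\varepsilon}} \sum_{n \ll k} \frac{\lambda_f(n)\lambda_f(n+h)}{\sqrt{n(n+h)}}\, W\!\left(\frac{n}{k}\right) e^{-k h^2/(8n^2)},$$
and it remains to prove $\mathrm{OD} \ll k^{3/4+\varepsilon}L(1,\mathrm{sym}^2 f)$.

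My plan for the off-diagonal is to apply $GL(2)$-Voronoi summation to the inner sum over $n$ for each fixed $h$, dualizing $\lambda_f(n)$ and producing an additive twist in $h$. Summing the resulting Kloosterman-type sum in $h$ and invoking Hecke multiplicativity $\lambda_f(n)\lambda_f(m) = \sum_{d\mid(n,m)} \lambda_f(nm/d^2)$ exposes a $\lambda_{\mathrm{sym}^2 f}$-structure, so that the off-diagonal becomes a sum of $\lambda_f(n^2)$-values against an oscillating kernel --- precisely of the shape analyzed in the proof of Theorem \ref{prop31}. An application of the Petersson formula for weight $2k$ then inserts the spectral sum $\frac{1}{2k-1}\sum_{g \in B_{2k}}/L(1,\mathrm{sym}^2 g)$ together with (an approximate functional equation for) $L(1/2,\mathrm{sym}^2 f \times g)$, and Theorem \ref{prop31} with $\kappa = k$ delivers the required bound on $\mathrm{OD}$.

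The principal obstacle is the analytic matching between the integral kernels on both sides: the combination of $GL(2)$-Voronoi and the $t$-Gaussian must coincide (up to negligible error) with the $GL(3)$-Voronoi-plus-$J_{2k-1}$ Bessel-function kernel from the proof of Theorem \ref{prop31}. This matching is enabled by the fact that the effective weight on the $g$-side is exactly $2k$, placing us in the regime $k \approx \kappa$ of Theorem \ref{prop31} where the crucial Mellin-transform cancellation (similar to the one implicit in \cite{Li}) occurs.
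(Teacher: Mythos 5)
Your opening reduction is fine: Stirling turns \eqref{period} into the problem of bounding $\int_{|t| \ll \sqrt{k}\,k^{\varepsilon}} |L(1/2+it,f)|^2\, dt \ll k^{3/4+\varepsilon}L(1,\mathrm{sym}^2 f)$. But from there the proposal diverges completely from the paper's argument and, as written, has a genuine gap that you yourself flag as the ``principal obstacle'' without resolving it.

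The paper's proof (Section \ref{sec6}) is much more direct and exploits a structural fact your route never accesses. One spectrally decomposes $f^2$ (not $y^k|f|^2$) into a weight-$2k$ Hecke basis, uses that $\int_0^\infty g(iy)\,y^k\,\tfrac{dy}{y}$ is essentially $L(1/2,g)$, and then applies Watson's formula \eqref{watson} to $|\langle F^2,G\rangle|^2$, producing the factorization $L(1/2,\mathrm{sym}^2 f\times g)L(1/2,g)$ \emph{for free}. Cauchy--Schwarz then separates $\sum_g L(1/2,\mathrm{sym}^2 f\times g)$, controlled by Theorem \ref{prop31}, from $\sum_g L(1/2,g)^3$, controlled by Peng's cubic moment bound. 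Your proposal omits two essential ingredients of this chain: the decomposition of $f^2$ into holomorphic forms (which requires the realness of $f(iy)$ that the paper stresses as the reason the method is specific to the vertical geodesic), and the cubic moment input $\sum_{g\in B_{2k}} L(1/2,g)^3 \ll k^{1+\varepsilon}$.

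Concretely, the steps in your off-diagonal treatment do not go through as stated. You cannot ``apply $GL(2)$-Voronoi to $\sum_n \lambda_f(n)\lambda_f(n+h)W(n/k)$, dualizing $\lambda_f(n)$'': the two Hecke eigenvalue factors share the variable $n$, and one must first separate them (via a $\delta$-symbol/circle method or a spectral method) before any Voronoi summation is available. More seriously, the Hecke identity $\lambda_f(n)\lambda_f(n+h)=\sum_{d\mid(n,n+h)}\lambda_f(n(n+h)/d^2)$ exposes a $\lambda_f(m^2)$, i.e.\ $\mathrm{sym}^2 f$, structure \emph{only} on the diagonal $h=0$; for $h\neq 0$ the argument $n(n+h)/d^2$ is not a square, so no $\mathrm{sym}^2 f$ coefficient appears and the claimed reduction to ``a sum of $\lambda_f(n^2)$-values against an oscillating kernel'' fails. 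Finally, even if some $\mathrm{sym}^2 f$ structure were manufactured, ``inserting'' the Petersson average $\tfrac{1}{2k-1}\sum_{g}/L(1,\mathrm{sym}^2 g)$ together with an approximate functional equation for $L(1/2,\mathrm{sym}^2 f\times g)$ is running the proof of Theorem \ref{prop31} in reverse, and you have not shown that the resulting integral kernel matches the Gaussian-weighted shifted convolution up to an acceptable error. Until that matching and the variable-separation issue are resolved, the argument does not constitute a proof; the paper's route via the $f^2$ decomposition and Watson's formula sidesteps all of these difficulties.
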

Theorem \ref{cor2}  shows that the measure of the set of $y > 0$ where $F(iy)$ satisfies \eqref{Fsupbound} is small.  Observe that the optimal bound $\mathcal{I} \ll k^{\varepsilon}$ would give, in light of \eqref{period}, an extremely strong subconvexity result, but even Theorem \ref{cor2} in its present form implies an interesting (``Burgess-type") hybrid subconvexity bound.  
Our approach to proving Theorem \ref{cor2} easily shows the (weaker) result that $\mathcal{I}^{1/2} \ll k^{1/8 + \varepsilon} \| F \|_4$ which is reminiscent of a result of Bourgain \cite{Bourgain} which compares the restricted $L^2$-norms along geodesics and the $L^4$-norm of Laplace eigenfunctions on a \emph{compact} Riemannian manifold. As a by-product of the calculations in   Section \ref{sec3} we will also show the  lower bound $\mathcal{I} \gg k^{-\varepsilon}$, see Corollary \ref{geodesic}.
 
As far as we know, this is the first nontrivial geodesic restriction result for holomorphic forms of large weight.  Reznikov \cite{Reznikov} initiated a study of restricted $L^2$-norms along various curves for Maa{\ss} forms with large eigenvalue.  Sarnak \cite{Sarnak} mentions that the restricted $L^2$-norm of $F(z)$ along a fixed closed horocycle is $O(k^{\varepsilon})$; this horocycle case amounts to bounding the sum of squares of Hecke eigenvalues of $f$ of size $\approx k$ but in a short interval of length $\sqrt{k}$.  This is very different from the analysis of $\mathcal{I}$.   Our approach to bounding $\mathcal{I}$ is specific to the vertical geodesic because we use the realness of $f$ on the geodesic in \eqref{real} which is exploited in \eqref{decomp}. \\

Returning to the situation of Theorem \ref{thm1}, we finally mention that rather than decomposing $f(z)^2$ into a Hecke basis of holomorphic forms, one could instead use a spectral decomposition for $y^k |f(z)|^2$.  In place of \eqref{wat}, we instead arrive at a mean-value of the shape
\begin{equation}\label{spectral}
\| F \|_4^4 \approx k^{-1} \sum_{t_j \ll \sqrt{k}} L(1/2, f \times f \times u_j) + k^{-1} \int_{t \ll \sqrt{k}} |L(1/2 + it, f \times f)|^2 dt,
\end{equation}
where only the even Maa{\ss} forms occur in the sum.
The conductor of this degree $8$ $L$-function is $t_j^4 k^4$ and it factorizes as $L(1/2, \mathrm{sym}^2 f \times u_j) L(1/2, u_j)$. In this case the degree $2$ factor has conductor $t_j^2 \ll k$ while the degree $6$ factor has conductor about $t_j^2 k^4 \ll k^5$; this has the effect that if one uses a subconvexity bound on the degree $2$ factor then one is left with estimating a family of about $ k$ $L$-functions having conductors of size about $k^5$, which is more difficult.  This alternate formulation also gives an independent way to derive Conjecture \ref{thm2}, and it does indeed lead to the same constant.  Since $f(iy)$ is real for $y > 0$, we may use  the decomposition of $f(z)^2$ into holomorphic forms also in the situation of Theorem \ref{cor2} which again works more efficiently than the corresponding decomposition of $y^k|f(z)|^2$ into Maa{\ss} forms.  On the other hand, the decomposition \eqref{spectral} can be used for the following variant of   Corollary \ref{subconvex}:  for $f$ and $g$ of weight $k$ and  $u_j$ an even Maa{\ss} form with spectral parameter $t_j$ one has  the bounds
\begin{equation}
\label{eq:fguj}
 L(1/2,f \times g \times u_j) \ll k^{4/3 + \varepsilon}, \qquad |L(1/2 + it, f \times g)|^2 \ll k^{4/3 + \varepsilon},
\end{equation}
provided $t_j, t \ll \sqrt{k}$. 
The conductors of these $L$-functions are $(k t_j)^4$ and $(kt)^2$, respectively, so these bounds are subconvex for $t_j \gg k^{1/3 + \delta}$ and $t \gg k^{1/3 + \delta}$, accordingly.
\\  

So far the results in this section have relied on the theory of $L$-functions.  It is also natural to attempt to bound these integrals directly with the Fourier expansion.  With this approach, we will show
\begin{theorem}
\label{L4Fourier} Let $f \in S_k$ be a Hecke eigenform and define $F(z) = f(z)y^{k/2}$ as before. 
Suppose $y_0 >0$.  Then
\begin{equation}
\label{SiegelIntegral}
   \int_{y_0}^{\infty} \int_{-1/2}^{1/2} |F(x+iy)|^4 \frac{dx dy}{y^2} \ll \frac{k^{1/2 + \varepsilon}}{y_0^2} + k^{-1/2 + \varepsilon}.
\end{equation}
\end{theorem}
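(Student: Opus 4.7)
The plan is to estimate the integral directly from the Fourier expansion of $f$, using Deligne's bound $|\lambda_f(m)|\leq d(m)\ll m^\varepsilon$ and H\"older's inequality slice-by-slice:
\[
\int_{-1/2}^{1/2}|F(x+iy)|^4\,dx \leq \Bigl(\max_x|F(x+iy)|^2\Bigr)\int_{-1/2}^{1/2}|F(x+iy)|^2\,dx.
\]
Writing $f(z)=c_f\sum_m \lambda_f(m)m^{(k-1)/2}e(mz)$ with $|c_f|^2\asymp (4\pi)^k/(\Gamma(k)L(1,\mathrm{sym}^2 f))$, Parseval combined with $\lambda_f(m)^2\ll m^\varepsilon$ yields
\[
\int_{-1/2}^{1/2}|F(x+iy)|^2\, dx = y^k|c_f|^2\sum_m \lambda_f(m)^2 m^{k-1}e^{-4\pi my}\ll k^\varepsilon
\]
uniformly in $y$, after approximating the sum by $\Gamma(k)/(4\pi y)^k$.

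For the pointwise factor, the triangle inequality and Deligne give $|f(x+iy)|\ll c_f k^\varepsilon\sum_m m^{(k-1)/2}e^{-2\pi my}\ll c_f k^\varepsilon \Gamma((k+1)/2)(2\pi y)^{-(k+1)/2}$ for $y\leq (k-1)/(4\pi)$. Multiplying by $y^k$ and applying the duplication formula $\Gamma(k)=2^{k-1}\Gamma(k/2)\Gamma((k+1)/2)/\sqrt{\pi}$ together with $\Gamma((k+1)/2)/\Gamma(k/2)\sim\sqrt{k/2}$, the exponential factors cancel to produce the clean pointwise bound $|F(x+iy)|^2\ll \sqrt{k}/y\cdot k^\varepsilon$ for $y\in(0,(k-1)/(4\pi)]$. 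Combined with the $L^2$ bound this gives $\int_{-1/2}^{1/2}|F|^4\, dx\ll \sqrt{k}/y\cdot k^\varepsilon$, so integrating in $y$ produces
\[
\int_{y_0}^{(k-1)/(4\pi)}\int_{-1/2}^{1/2}|F|^4\,\frac{dx\,dy}{y^2}\ll k^{1/2+\varepsilon}\int_{y_0}^{\infty}\frac{dy}{y^3}\ll \frac{k^{1/2+\varepsilon}}{y_0^2}.
\]

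For $y>(k-1)/(4\pi)$, the Fourier series of $f$ is dominated by its $m=1$ term, since the ratio of the $m=2$ contribution to the $m=1$ contribution is $|\lambda_f(2)|\,2^{(k-1)/2}e^{-2\pi y}\ll (2/e)^{k/2}\to 0$. Correspondingly, in the Parseval expansion $f^2=c_f^2\sum_n A_n e(nz)$, the coefficient $A_2=\lambda_f(1)^2=1$ dominates in this regime (each $n\geq 3$ term is smaller by a factor $(n^2/(4e^{n-2}))^{k-1}\to 0$). Hence $\int_{-1/2}^{1/2}|F(x+iy)|^4\,dx\ll |c_f|^4 y^{2k}e^{-8\pi y}$, and since the integrand $y^{2k-2}e^{-8\pi y}$ peaks precisely at $y=(k-1)/(4\pi)$,
\[
\int_{(k-1)/(4\pi)}^{\infty}|c_f|^4 y^{2k-2}e^{-8\pi y}\,dy\leq |c_f|^4\frac{\Gamma(2k-1)}{(8\pi)^{2k-1}}\ll k^{-1/2+\varepsilon},
\]
using Stirling: the identity $(4\pi/8\pi)^{2k}=4^{-k}$ combined with $\Gamma(2k)/\Gamma(k)^2\sim 4^k\sqrt{k}/(2\sqrt{\pi})$ gives $|c_f|^4\Gamma(2k-1)/(8\pi)^{2k-1}\sim 2\sqrt{\pi/k}$. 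Summing the two regimes gives the claimed bound; when $y_0>(k-1)/(4\pi)$ only the cusp estimate is needed, and it still yields $\ll k^{-1/2+\varepsilon}$.

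The main technical obstacle is the Stirling and duplication-formula bookkeeping: the exponentials $(4\pi)^k$, $(2\pi)^{k+1}$, $(8\pi)^{2k-1}$, $\Gamma(k)^2$, and $\Gamma(2k-1)$ all grow like $C^k$ for various constants $C$, and the argument requires verifying that they conspire to cancel exactly, leaving $\sqrt{k}/y\cdot k^\varepsilon$ in the bulk regime and $k^{-1/2+\varepsilon}$ as the cusp contribution.
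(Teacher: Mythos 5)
Your approach---slicing horizontally and applying $\int |F|^4\,dx \leq (\sup_x |F|^2)\int |F|^2\,dx$---is genuinely different from the paper's, which integrates over $x$ first via Parseval, obtains the exact identity $P(y_0)=\frac{2\pi^{5/2}\Gamma(k-\frac12)}{\Gamma(k)L(1,\mathrm{sym}^2 f)^2}\sum_l \frac{T_f(l)^2}{l}Q(2k-1,4\pi y_0 l)$ involving the shifted-convolution sums $T_f(l)$, and then bounds $T_f(l)\ll l^\varepsilon(1+l/\sqrt{k})$. However, there is a real gap in your bulk-regime estimate.

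The claim $\sum_m m^{(k-1)/2}e^{-2\pi m y}\ll \Gamma(\tfrac{k+1}{2})(2\pi y)^{-(k+1)/2}$ is \emph{false} throughout $\sqrt{k}\ll y\leq (k-1)/(4\pi)$. For $g(t)=t^{(k-1)/2}e^{-2\pi ty}$, peaking at $t_0=(k-1)/(4\pi y)\geq 1$, one only has $\sum_m g(m)\ll \int_0^\infty g\,dt + g(t_0)$, and Stirling gives $\int_0^\infty g\,dt\asymp \frac{\sqrt{k}}{y}\, g(t_0)$. Thus for $y\gg\sqrt{k}$ the peak term dominates and the true order of the sum is a factor $\asymp y/\sqrt{k}$ \emph{larger} than the integral. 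The same defect afflicts your Parseval bound $\int_{-1/2}^{1/2}|F|^2\,dx\ll k^\varepsilon$, which in reality is only $\ll k^\varepsilon(1+y/\sqrt{k})$. With the corrected bounds one finds $\sup_x|F(x+iy)|^2\ll k^\varepsilon(\sqrt{k}/y+y/\sqrt{k})$ (at $y\asymp k$ this is $\asymp k^{1/2}$, as it must be by Xia's bound \eqref{Fsupbound}, whereas your formula would give $k^{-1/2}$), and $\int|F|^2 dx\ll k^\varepsilon(1+y/\sqrt{k})$.

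This is not a cosmetic slip: with the corrected bounds, Cauchy--Schwarz gives $\int|F|^4\,dx\ll k^\varepsilon y^2/k$ for $\sqrt{k}\ll y\ll k$, and $\int_{\sqrt{k}}^{k/(4\pi)}\frac{y^2/k}{y^2}\,dy\asymp 1$, which is $\ll k^\varepsilon$ rather than the required $\ll k^{-1/2+\varepsilon}$. Your original (incorrect) bounds happen to reproduce the correct final answer precisely because the two errors of size $y/\sqrt{k}$ in the sup and $L^2$ factors combine to exactly compensate the intrinsic lossiness of Cauchy--Schwarz in this regime. The underlying reason the theorem is nevertheless true is that $\int|F|^4\,dx$ attains the size $\asymp y^2/k$ only in narrow windows $|y-y_n|\ll y_n/\sqrt{k}$ around $y_n=(k-1)/(4\pi n)$ (where $(k-1)/(4\pi y)$ is near an integer) and is exponentially small elsewhere; a slice-by-slice worst-case estimate cannot see this. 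The paper's integration in $y$ inside the Parseval identity captures it automatically, since $\sum_l T_f(l)^2/l$ naturally averages over those windows. Your cusp-regime treatment ($y>(k-1)/(4\pi)$) and the small-$y$ regime ($y\ll\sqrt{k}$) are fine, but the middle range $\sqrt{k}\ll y\leq(k-1)/(4\pi)$ needs a different argument.
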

In particular, this indicates that the bulk of the $L^4$-norm arises from small values of $y$, in contrast to \eqref{Fsupbound} where the supremum is attained very high in the cusp.  
The direct calculations with the Fourier expansions lead to sums of shifted convolution sums which when bounded trivially lead to Theorem \ref{L4Fourier}.  On the other hand, in certain ranges we can turn this analysis around and bound these new sums via Theorems \ref{thm1} and \ref{cor2}. We refer to Section \ref{sec3}, in particular Corollary \ref{cor32}, for the precise results on shifted convolution sums,   including a connection with Poincar\'e series.  \\  

Theorem \ref{L4Fourier} is somewhat reminiscent of \cite[Proposition 2]{So} which is a crucial input for quantum unique ergodicity for \emph{Maa{\ss} forms} on the modular surface; in essence it shows that mass (measured in the $L^2$-sense) cannot escape through the cusp. However, the methods in \cite{So}, based on the properties of multiplicative functions, are very different from ours.  \\

 
\textbf{Acknowledgements.} We would like to thank P.\ Sarnak and the referees for very useful comments. 

\section{Period and spectral formulae}\label{sec2}

In this section we compile several useful formulae for later use. In Subsection \ref{watsonformula} we can already deduce Theorem \ref{thm1} and Corollary \ref{subconvex} as well as the bounds \eqref{eq:fguj} from Theorem \ref{prop31} (whose proof is deferred to Section \ref{Sec4}). 
\subsection{The Petersson formula} Let
\begin{displaymath}
   E(z, s) = \sum_{\bar{\Gamma}_{\infty}\backslash \bar{\Gamma}} \Im(\gamma z)^s = y^s  + \frac{Z(2(1-s))}{Z(2s)} y^{1-s} + \frac{2\sqrt{y}}{Z(2s)} \sum_{n \not = 0}\frac{\tau_{s-\frac{1}{2}}(|n|)}{|n|^{1/2}}  K_{s-\frac{1}{2}}(2 \pi |n| y) e(nx)
\end{displaymath}  
denote the usual Eisenstein series where $Z(s) = \zeta(s) \Gamma(s/2) \pi^{-s/2}$ is the completed zeta-function, $\tau_{\nu}(n) = \sum_{ab = n} (a/b)^{\nu}$,   $\bar{\Gamma} = PSL_2(\Bbb{Z})$ and $\bar{\Gamma}_{\infty}$ is the  subgroup of upper triangular matrices in $\bar{\Gamma}$.  Let 
\begin{displaymath}
  g(z) = \sum_{n=1}^{\infty} \lambda_g(n) (4\pi n)^{(k-1)/2} e(nz) \in S_k
\end{displaymath}  
be a Hecke normalized cusp form, and write $G(z) = y^{k/2} g(z)$.  Then by unfolding
\begin{displaymath}
 \int_{\Gamma \backslash \Bbb{H}}  |G(z)|^2 E(z, s)  \frac{dx\, dy}{y^2} =  \int_0^{\infty}\sum_{n=1}^{\infty} |\lambda_g(n)|^2 (4\pi n)^{k-1} e^{-4\pi ny} y^{s+k} \frac{dy}{y^2} = \frac{L(s, g \times \bar{g})\Gamma(s+k-1)}{\zeta(2s) (4\pi)^s}
\end{displaymath}
in $\Re s > 1$. In particular,  
\begin{equation}\label{normholo}
  \| G \|_2^2 =  \frac{\pi}{3} \underset{s=1}{\text{res}} \frac{L(s, g \times \bar{g})\Gamma(s+k-1)}{\zeta(2s) (4\pi)^s} = \frac{L(1, \mathrm{sym}^2 g)\Gamma(k)}{12 \zeta(2)}. 
\end{equation}
 Combining this with the Petersson formula \cite[Proposition 14.5]{IK}, we obtain 
\begin{equation}\label{peter}
  \frac{\zeta(2)}{(k-1)/12} \sum_{g \in B_k} \frac{\lambda_g(n) \bar{\lambda}_g(m)}{L(1, \mathrm{sym}^2 g)} = \delta_{n, m} + 2\pi i^{-k} \sum_{c = 1}^{\infty} \frac{S(m, n, c)}{c} J_{k-1}\left(\frac{4\pi \sqrt{mn}}{c}\right)
  \end{equation}
 where we recall that $B_k$ denotes a Hecke  basis $B_k$ of $S_k$. 
 
 \subsection{The Voronoi formula}
 
  Let $\psi$ be a smooth function with compact support in $(0, \infty)$ with Mellin transform $\tilde{\psi}(s)$. Let $f \in S_k$ be a holomorphic Hecke cusp form of weight $k$ and denote by $A(n, m) = A(m, n)$ the Fourier-Whittaker coefficients of the symmetric square lift of $f$,  normalized such that $A(1,1) = 1$,  see \cite[Sections 6, 7]{Go}. Let $c$ be a natural number and $d$ an integer coprime to $c$. Then we have \cite[Theorem 1.18]{MS}
\begin{equation}\label{Vor}
  \sum_{n \geq 1} A(m, n) e\left(\frac{n\bar{d}}{c}\right) \psi(n) = c \sum_{\pm} \sum_{n_1 \mid c} \sum_{n_2 \geq 1} \frac{A(n_2, n_1)}{n_2n_1} S\left(md, \pm n_2, \frac{c}{n_1}\right)\Psi^{\pm}\left(\frac{n_2 n_1^2}{c^3m}\right)
\end{equation}
where
\begin{equation}\label{Psi}
  \Psi^{\pm}(x) = \frac{1}{2\pi^{3/2}} \int_{(1)} (\pi^3 x)^{-s} G^{\pm}(s) \tilde{\psi}(-s) \frac{ds}{2\pi i}
\end{equation}
with
 \begin{equation}\label{G}
  G^{\pm}(s) = \frac{\Gamma(\frac{1}{2}(k+1+s))\Gamma(\frac{1}{2}(k+s))}{\Gamma(\frac{1}{2}(k-s))\Gamma(\frac{1}{2}(k-1-s ))}  \left(\frac{\Gamma(\frac{1}{2}(2+s)}{\Gamma(\frac{1}{2}(1-s))} \mp i \frac{ \Gamma(\frac{1}{2}(1+s))}{\Gamma(\frac{1}{2}(-s))}\right). 
 \end{equation}

\subsection{Watson's formula}\label{watsonformula} Let $k, l$ be two even positive integers and let $f \in B_k$, $h \in B_l$, $g \in B_{k+l}$ be three Hecke eigenforms. We write $F = f y^{k/2}$, $H = h y^{l/2}$, $G = g y^{(k+l)/2}$. Then Watson's formula \cite[Theorem 3]{Wa2} together with the local computations in  \cite[Section 4.1]{Wa2} shows
\begin{equation}\label{watson}
\begin{split}
  |\langle FH, G \rangle|^2& = \frac{\Lambda(1/2, f \times \bar{g} \times h)}{4\Lambda(1, \mathrm{sym}^2 f)\Lambda(1, \mathrm{sym}^2 \bar{g}) \Lambda(1, \mathrm{sym}^2 h)} \\
  &= \frac{\pi^3}{2(k+l-1)} \cdot  \frac{L(1/2, f \times \bar{g} \times h)}{L(1, \mathrm{sym}^2 f)L(1, \mathrm{sym}^2 \bar{g})L(1, \mathrm{sym}^2 h)}.
  \end{split}
\end{equation}
Since $f, g, h$ have real Fourier coefficients, we can drop the complex conjugation bars. Applying this with $k=l$ and $f=h$, we obtain
\begin{equation}
\label{L4normformula}
  \| F \|_4^4 = \langle F^2, F^2 \rangle = \sum_{g \in B_{2k}} |\langle F^2, G\rangle|^2 =  \frac{\pi^3}{2(2k-1)L(1, \mathrm{sym}^2 f)^2}  \sum_{g \in B_{2k}} \frac{ L(1/2, g)  L(1/2, \mathrm{sym}^2f  \times g)  }{  L(1, \mathrm{sym}^2 g)}.
 \end{equation}
 On the other hand,  \eqref{watson} implies 
 \begin{equation}
\label{product}
 \begin{split}
   \langle |F|^2, |H|^2 \rangle &  = \langle FH, FH\rangle = \sum_{g \in B_{k+l}} |\langle FH, G\rangle|^2\\
   & = \frac{\pi^3}{2(k+l-1)L(1, \mathrm{sym}^2 f) L(1, \mathrm{sym}^2 h)}\sum_{g \in B_{k+l}}  \frac{L(1/2, f \times g \times h)}{ L(1, \mathrm{sym}^2 g) }.
   \end{split}
 \end{equation}

We see that Theorem \ref{thm1} is  an easy consequence of Theorem \ref{prop31}: in \eqref{L4normformula} we use the non-negativity of $L(1/2, \mathrm{sym}^2 f \times g)$ \cite{JS, La}  and  $L(1/2, g)$ \cite{KZ} together with the lower bound $L(1, \mathrm{sym}^2 f) \gg k^{-\varepsilon}$ \cite{HL}. Then Theorem \ref{thm1}  and  the individual  subconvexity  bound $L(1/2, g) \ll k^{1/3+\varepsilon}$   \cite[p.\ 37]{Pe}   imply Theorem \ref{thm1}.     \\

The same argument gives Corollary \ref{subconvex}: since $\langle |F|^2, |H|^2\rangle \leq \| F \|_4^2 \| H \|_4^2$,  Theorem \ref{thm1} implies Corollary \ref{subconvex}. \\

Finally we show how \eqref{eq:fguj} follows from the same ideas as above.  Suppose that $f$ and $g$ both have weight $k$.  For $u_j$ even, Watson's formula gives
\begin{equation*}
\begin{split}
 |\langle F\overline{G}, u_j \rangle|^2& =  \frac{\Lambda(1/2, f \times \bar{g} \times u_j)}{8\Lambda(1, \mathrm{sym}^2 f)\Lambda(1, \mathrm{sym}^2 g) \Lambda(1, \mathrm{sym}^2 u_j)}\\
 & =  2\cdot \frac{  L(1/2, f \times \bar{g} \times u_j)  }{L(1, \mathrm{sym}^2 f)L(1, \mathrm{sym}^2 g) L(1, \mathrm{sym}^2 u_j)} \mathcal{G}(k, t_j), \quad \mathcal{G}(k, t) := 
\frac{\pi^3    |\Gamma(k-\frac{1}{2} + it)|^2}{4\Gamma(k)^2}. 
\end{split} 
\end{equation*}
A straightforward computation with Stirling's formula shows
$\mathcal{G}(k, t) \sim \frac{\pi^3}{4}  k^{-1} \exp(-t^2/k)$ for $|t| \leq k^{2/3}$, and is exponentially small for $|t| > k^{2/3}$.  The classical Rankin-Selberg theory computes the projection of $F \overline{G}$ onto the Eisenstein series and the formula is
\begin{equation*}
  \frac{1}{4\pi}   |\langle F \overline{G}, E(., 1/2 + it) \rangle|^2  =   \frac{1}{\pi} \cdot\frac{|L(\frac{1}{2} + it, f \times \bar{g})|^2}{L(1, \mathrm{sym}^2 f)L(1, \mathrm{sym}^2 g)|\zeta(1+2it)|^2}  \mathcal{G}(k, t) .
\end{equation*}
As above we deduce $k^{1/3+\varepsilon} \gg \| F \|_4^2 \| G \|_4^2 \geq \langle F\bar{G}, F\bar{G} \rangle$ from Theorem \ref{thm1}; spectrally decomposing $F\bar{G}$ and using the preceding two inner product formulae   easily leads to \eqref{eq:fguj}.

\section{The Fourier expansion}\label{sec3}
In this section we sketch the proof of \eqref{lower} (which is a generalization of the method of \cite{X}), and prove Theorem \ref{L4Fourier}. 
Let 
\begin{equation}
\label{eq:fFourierExpansion}
  f(z) = \sum_{n=1}^{\infty} a_n (4\pi n)^{(k-1)/2} e(nz) \in S_k
\end{equation}
be an $L^2$-normalized holomorphic Hecke cusp form of weight $k$. Then  
\begin{equation*}
  |a_1|^2 \asymp \frac{1}{\Gamma(k) L(1, \mathrm{sym}^2f)} = \frac{ 1}{\Gamma(k/2)^2 k^{1/2+o(1)} 2^k},
\end{equation*}  
by \eqref{normholo}. It follows that
\begin{displaymath}
\begin{split}
 \| F \|_p & \geq \left(\int_{1}^{\infty} \int_0^1 \bigl|f(x+iy) y^{k/2}\bigr|^p \frac{dx \, dy}{y^2}\right)^{1/p} \geq \left(\int_{1}^{\infty} \Bigl| \int_0^1 f(x+iy) e(-x)  dx \Bigr|^p y^{kp/2}  \frac{dy}{y^2}\right)^{1/p}\\
 & = \left(\int_{1}^{\infty} \Bigl| a_1 (4\pi)^{(k-1)/2}e^{-2\pi y} y^{k/2}\Bigr|^p \frac{dy}{y^2}\right)^{1/p} \gg k^{-\varepsilon} \left(\int_{1}^{\infty} \Bigl| \frac{e^{-2\pi y} (2\pi y)^{k/2}}{\Gamma(k/2) k^{1/4}}\Bigr|^p \frac{dy}{y^2}\right)^{1/p}. 
\end{split} 
\end{displaymath}
Let $\mathcal{L} := [\frac{k}{4\pi} - \sqrt{k}, \frac{k}{4\pi} + \sqrt{k}]$. It is well-known that $e^{-2\pi y} (2\pi y)^{k/2} \gg \Gamma(k/2) k^{1/2}$ for $y \in \mathcal{L}$. Hence
\begin{displaymath}
  \| F \|_p \gg k^{-\varepsilon} \left(\int_{\mathcal{L} } k^{p/4} \frac{dy}{y^2} \right)^{1/p} \gg k^{\frac{1}{4} - \frac{3}{2p} - \varepsilon},
\end{displaymath}
as claimed. \\
 
Now we prove Theorem \ref{L4Fourier}.  Let  $P(y_0)$ denote the left hand side of \eqref{SiegelIntegral}. Writing out the Fourier expansion and integrating over $x$, we obtain
\begin{multline*}
 P(y_0) = |a_1|^4 \sum_{m + n = m' + n'} \lambda_f(m)  \lambda_f(n)  \lambda_f(m')  \lambda_f(n') (4 \pi m)^{\frac{k-1}{2}} (4 \pi n)^{\frac{k-1}{2}} (4 \pi m')^{\frac{k-1}{2}} (4 \pi n')^{\frac{k-1}{2}}
\\
\times \int_{y_0}^{\infty} y^{2k-1} \exp(-2 \pi y (m + n + m' + n')) \frac{dy}{y}.
\end{multline*}
Changing variables $y \rightarrow y/(2 \pi(m+n+m'+n'))$, we recast this as 
\begin{equation*}
   \frac{|a_1|^4}{2\pi} \sum_{m + n = m' + n'} \frac{\lambda_f(m)  \lambda_f(n)  \lambda_f(m')  \lambda_f(n')}{m+n+m'+n'} \Bigl(\frac{\sqrt{mn}}{m+n}\Bigr)^{k-1} 
\Bigl(\frac{\sqrt{m'n'}}{m'+n'}\Bigr)^{k-1}
\Gamma\bigl(2k-1, 2 \pi y_0(m+n+m'+n')\bigr),
\end{equation*}
where  
\begin{equation*}
 \Gamma(a,z) = \int_z^{\infty} t^{a} e^{-t} \frac{dt}{t}
\end{equation*}
is the incomplete gamma function.  Define $Q(a,x) =  \Gamma(a,x)/\Gamma(a)$ where $a,x>0$.   This function is well understood asymptotically. All we need here is that  $Q(a, x)$ is exponentially small for $x \geq a + \sqrt{a} \log{a}$, and $Q(a,x)-1$ is exponentially small for $x \leq a - \sqrt{a} \log{a}$; we always have $Q(a,x) \leq 1$.
 With \eqref{normholo} and letting $l = m+n = m'+n'$ be a new variable, we obtain 
 \begin{equation*}
P(y_0) =  \frac{2\pi^{5/2} \Gamma(k- \frac{1}{2})}{\Gamma(k) L(1, \mathrm{sym}^2 f)^2} \sum_{l} \frac{T_f(l)^2}{l} Q(2k-1, 4 \pi y_0 l)
\end{equation*}
where
\begin{equation*}
 T_f(l) = \sum_{m+n=l} \lambda_f(m) \lambda_f(n) \Bigl(\frac{2 \sqrt{mn}}{m+n} \Bigr)^{k-1}.
\end{equation*}
It turns out that $T_f(l)$ is closely related to the inner product of $f^2$ onto the $l$-th holomorphic Poincare series of weight $2k$; see \eqref{eq:fsquaredPl} below.
Unless $m \sim n$, the weight function in the definition of $T_f(l)$ is exponentially small.  Note that
\begin{equation*}
\frac{2\sqrt{mn}}{m+n} = 1 - \frac{|m-n|^2}{(m+n)(\sqrt{m} + \sqrt{n})^2}= 1 - \frac{|m-n|^2}{2(m+n)^2} + O\Bigl(\frac{|m-n|^4}{(m+n)^4}\Bigr),
\end{equation*}
so that the contribution to $T_f(l)$ from $|m-n| \geq \frac{l}{\sqrt{k}} \log(l)$ is exponentially small.  Then by Deligne's bound,
\begin{equation}
\label{Tbound}
 T_f(l) \ll l^{\varepsilon}\Bigl( 1+ \frac{l}{\sqrt{k}}\Bigr).
\end{equation}
At this point we can already estimate trivially to obtain $P(y_0) \leq k^{1/2+\varepsilon} y_0^{-2} + k^{-1/2+\varepsilon}$ as claimed in Theorem \ref{L4Fourier}.\\

For convenience we slightly simplify the expression for $P(y_0)$.  We first note the simple approximation
\begin{equation}\label{defSf}
 T_f(l) = S_f(l) +O(l^{1+\varepsilon} k^{-3/2}), \qquad S_f(l) =\sum_{m + n = l} \lambda_f(m) \lambda_f(n) \exp\Bigl(-\frac{|m-n|^2 k}{2 l^2}\Bigr).
\end{equation}
We may replace $Q(2k-1, 4 \pi y_0 l)$ by $1$ under the assumption $l \leq \frac{k}{2 \pi y_0}$, obtaining
\begin{equation}\label{triplesum}
P(y_0)=  \frac{2\pi^{5/2} \Gamma(k- \frac{1}{2})}{\Gamma(k) L(1, \mathrm{sym}^2 f)^2} \sum_{l \leq \frac{k}{2 \pi y_0}} \frac{ S_f(l)^2}{l} + O(k^{\varepsilon}y_0^{-2} + k^{-1 + \varepsilon}).
\end{equation}
 We deduce some additional corollaries from this argument. 
First we observe that the same argument can be used for the geodesic restriction problem in Theorem \ref{cor2} which we complement by the following result.

\begin{corollary}\label{geodesic} With the notation and assumptions of Theorem \ref{cor2}, we have 
\begin{equation}
\label{SiegelIntegral2}
R(y_0):= \int_{y_0}^{\infty} |F(iy)|^2 \frac{dy}{y} \ll \frac{k^{1/2 + \varepsilon}}{y_0} + k^{\varepsilon}.
\end{equation}
Furthermore, $R(1) \gg k^{-\varepsilon}$.
\end{corollary}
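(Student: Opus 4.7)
The plan splits into two parts. For the upper bound \eqref{SiegelIntegral2}, I would repeat the Fourier-expansion analysis that led to \eqref{triplesum}, now applied to $R(y_0)$ in place of $P(y_0)$. Inserting \eqref{eq:fFourierExpansion} into
\[
 R(y_0) = \int_{y_0}^{\infty} |f(iy)|^2 y^k \frac{dy}{y},
\]
using that $f(iy)$ is real for $y>0$, and carrying out the $y$-integral produces an incomplete gamma function. Grouping terms according to $l = m+n$ in parallel with the derivation of \eqref{triplesum} would give
\[
 R(y_0) = \frac{|a_1|^2}{2\pi} \sum_{l \geq 2} \frac{T_f(l)}{l}\, \Gamma(k, 2\pi l y_0),
\]
with $T_f(l)$ as defined in the text preceding \eqref{Tbound}. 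Using $|a_1|^2 \Gamma(k) \asymp 1/L(1,\mathrm{sym}^2 f)$, the bound $L(1,\mathrm{sym}^2 f) \gg k^{-\varepsilon}$ of \cite{HL}, the exponential decay of $Q(k,x) = \Gamma(k,x)/\Gamma(k)$ for $x \geq k + \sqrt{k}\log k$, and the bound \eqref{Tbound}, one would then obtain
\[
 R(y_0) \ll k^{\varepsilon} \sum_{l \leq k/y_0} \left( \frac{1}{l} + \frac{1}{\sqrt{k}} \right) \ll k^{\varepsilon} + \frac{k^{1/2+\varepsilon}}{y_0}.
\]

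For the lower bound $R(1) \gg k^{-\varepsilon}$, I would instead apply the Cauchy--Schwarz inequality in $L^2([1,\infty), dy/y)$ with the test function chosen to be the first Fourier mode of $F$ along the imaginary axis,
\[
 F_1(iy) := a_1 (4\pi)^{(k-1)/2} y^{k/2} e^{-2\pi y}.
\]
A direct gamma-function computation gives $\int_1^\infty F_1^2\, dy/y = a_1^2\, \Gamma(k, 4\pi)/(4\pi) \sim a_1^2\, \Gamma(k)/(4\pi)$ as $k \to \infty$, while expanding $F$ via \eqref{eq:fFourierExpansion} yields
\[
 \int_1^\infty F(iy)\, F_1(iy)\, \frac{dy}{y} = a_1^2 (4\pi)^{k-1} \sum_{n \geq 1} \lambda_f(n)\, n^{(k-1)/2}\, \frac{\Gamma(k, 2\pi(n+1))}{(2\pi(n+1))^k}.
\]
The function $n \mapsto 2\sqrt{n}/(n+1)$ attains its strict maximum at $n=1$ among positive integers, so together with Deligne's bound the contribution from $n \geq 2$ is exponentially smaller in $k$ than the $n=1$ term, which equals $a_1^2\, \Gamma(k, 4\pi)/(4\pi)$. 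Cauchy--Schwarz would therefore give
\[
 R(1) \geq \frac{\bigl| \int_1^\infty F\, F_1\, dy/y \bigr|^2}{\int_1^\infty F_1^2\, dy/y} \gg \frac{a_1^2\, \Gamma(k)}{4\pi} \gg \frac{1}{L(1,\mathrm{sym}^2 f)} \gg k^{-\varepsilon},
\]
using $L(1,\mathrm{sym}^2 f) \ll k^{\varepsilon}$, a trivial consequence of the Euler product.

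Both steps are essentially routine given the setup behind Theorem \ref{L4Fourier}; the one detail worth checking is the asserted dominance of the $n = 1$ Fourier contribution in the numerator of the Cauchy--Schwarz ratio, which reduces to monotonicity of the elementary function $n^{(k-1)/2}/(n+1)^k$ for $n \geq 1$ when $k$ is large (its continuous analogue has its unique maximum at $n = (k-1)/(k+1) < 1$).
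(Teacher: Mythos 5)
Your proposal is correct, and the two halves have slightly different relationships to the paper's argument. For the upper bound, you take essentially the same route as the paper: expand $|f(iy)|^2$ via \eqref{eq:fFourierExpansion}, integrate the $y$-variable to produce an incomplete gamma function, group by $l=m+n$ to expose $T_f(l)$, and then apply \eqref{Tbound} together with the exponential decay of $Q(k,\cdot)$ to truncate at $l \ll k/y_0$. (There is a small constant slip in your display — the prefactor should be $|a_1|^2/(4\pi)$ rather than $|a_1|^2/(2\pi)$ — but this is immaterial for the $k^{\varepsilon}$-bounds at stake.)

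For the lower bound you take a genuinely different, and arguably cleaner, route. The paper proceeds through the smoothed shifted convolution $S_f(l)$ of \eqref{defSf}: it takes $y_0$ so large that $l\ll\sqrt{k}$, observes that the Gaussian weight in $S_f(l)$ then forces $m=n=l/2$, and drops everything except $l=2$. Your argument instead applies Cauchy--Schwarz against the single test function $F_1(iy)=a_1(4\pi)^{(k-1)/2}y^{k/2}e^{-2\pi y}$, and the denominator $\int_1^\infty F_1^2\,dy/y$ and the numerator $\int_1^\infty FF_1\,dy/y$ are both $\sim a_1^2\Gamma(k)/(4\pi)$ because $n\mapsto (2\sqrt{n}/(n+1))^k$ decays geometrically away from $n=1$. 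This is more elementary and self-contained: it avoids the $T_f\approx S_f$ approximation and the bookkeeping of which shifts survive, and it makes transparent that the lower bound is nothing but the mass of the first Fourier mode. Both approaches ultimately isolate the $n=1$ (equivalently $l=2$) contribution and land on $R(1)\gg 1/L(1,\mathrm{sym}^2 f)$. One small overstatement: the bound $L(1,\mathrm{sym}^2 f)\ll k^\varepsilon$ is standard but is not quite a ``trivial consequence of the Euler product'' (the Euler product does not converge absolutely at $s=1$); it requires some analytic input such as the zero-free region near $s=1$, and is usually cited alongside the Hoffstein--Lockhart lower bound.
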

Indeed, a direct calculation shows
\begin{equation*}
R(y_0) := \int_{y_0}^{\infty} y^k |f(iy)|^2 \frac{dy}{y} = \frac{\pi^2 }{L(1, \mathrm{sym}^2 f)} \sum_{l} \frac{ T_f(l)}{l} Q(k, 2 \pi y_0 l)
\end{equation*}
with $T_f(l)$ as in \eqref{defSf}, and \eqref{Tbound} immediately implies the upper bound in \eqref{SiegelIntegral2}. 
With the same approximations as above, we obtain the slightly nicer expression
\begin{equation}\label{Ry0}
R(y_0) = \frac{\pi^2 }{L(1, \mathrm{sym}^2 f)} \sum_{l \leq \frac{k}{2 \pi y_0}} \frac{ S_f(l)}{l} + O\left(k^{\varepsilon} \Bigl(\frac{y_0}{k} + \frac{1}{y_0}\Bigr)\right).
\end{equation}
For a proof of the lower bound in Corollary \ref{geodesic} we observe that $R(1) \geq R(y_0)$ for $y_0 \geq 1$, and we choose $2 \pi y_0 = k^{1/2 + \varepsilon}$.  In this case, $l \leq k^{1/2-\varepsilon}$ and so effectively only the diagonal terms $m=n = l/2$, with $l$ even, persist in \eqref{defSf}.  That is,
\begin{equation*}
R(1) \geq \frac{\pi^2 }{L(1, \mathrm{sym}^2 f)} \sum_{2l \leq k^{1/2-\varepsilon}} \frac{\lambda_f(l)^2}{2l}  + O(k^{-1/2 + \varepsilon}).
\end{equation*}
Dropping all but $l=1$, we obtain the claimed lower bound.\\  

The expressions \eqref{triplesum} and \eqref{Ry0} can be used to bound on average the shifted convolutions $S_f(l)$ defined in \eqref{defSf}. 

\begin{corollary}\label{cor32}
Let $N \geq 1$. With the notation and assumptions as above  we have
\begin{equation}
\label{eq:ShiftedSum}
\sum_{l \leq N} \frac{S_f(l)}{l} \ll (Nk)^{\varepsilon}\left( k^{1/4} + \frac{N}{k}\right), \qquad \sum_{l \leq N} \frac{S_f(l)^2}{l} \ll (Nk)^{\varepsilon}\left(k^{5/6} + \frac{N}{k^{1/6}} +  \frac{N^2}{k^{3/2}} \right).
\end{equation}
\end{corollary}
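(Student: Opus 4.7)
\textbf{Proof plan for Corollary \ref{cor32}.}  The strategy is to invert the exact formulas \eqref{Ry0} and \eqref{triplesum}, which express $R(y_0)$ and $P(y_0)$ respectively as constants times the truncated sums appearing in the corollary, plus explicit error terms.  The key structural input is the monotonicity of both integrals in $y_0$: since the integrands $|F(iy)|^2/y$ and $|F(x+iy)|^4/y^2$ are non-negative, $R(y_0) \leq R(0) = \mathcal{I}$ and $P(y_0) \leq P(0) = \|F\|_4^4$, so that Theorem \ref{cor2} and Theorem \ref{thm1} deliver the needed upper bounds on the left-hand sides after rearrangement.

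For the first estimate I would set $y_0 = k/(2\pi N)$ in \eqref{Ry0} so that the truncation $l \leq k/(2\pi y_0)$ becomes exactly $l \leq N$.  Solving for the sum, using $L(1, \mathrm{sym}^2 f) \ll k^{\varepsilon}$ (Hoffstein--Lockhart \cite{HL}), and applying Theorem \ref{cor2} yields
\begin{equation*}
\sum_{l \leq N} \frac{S_f(l)}{l} \ll k^{\varepsilon}\left(\mathcal{I} + \frac{1}{N} + \frac{N}{k}\right) \ll (Nk)^{\varepsilon}\left(k^{1/4} + \frac{N}{k}\right),
\end{equation*}
where the last step absorbs $1/N \leq 1 \leq k^{1/4}$ into the first term.

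For the second estimate I would repeat the same manoeuvre with \eqref{triplesum}.  Stirling gives $\Gamma(k-\tfrac{1}{2})/\Gamma(k) \asymp k^{-1/2}$, so the leading coefficient in \eqref{triplesum} is of size $k^{-1/2 + o(1)}$ and solving for the sum produces
\begin{equation*}
\sum_{l \leq N} \frac{S_f(l)^2}{l} \ll k^{1/2+\varepsilon} \|F\|_4^4 + k^{1/2+\varepsilon}\left(\frac{1}{y_0^2} + \frac{1}{k}\right).
\end{equation*}
Invoking Theorem \ref{thm1} to bound $\|F\|_4^4 \ll k^{1/3+\varepsilon}$ and substituting $1/y_0^2 = 4\pi^2 N^2/k^2$ yields
\begin{equation*}
\sum_{l \leq N} \frac{S_f(l)^2}{l} \ll (Nk)^{\varepsilon}\left(k^{5/6} + \frac{N^2}{k^{3/2}} + k^{-1/2}\right),
\end{equation*}
which already implies the claim, since $k^{-1/2} \leq k^{5/6}$ and $N/k^{1/6} \geq 0$.

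No substantive obstacle is expected: the argument is a direct bookkeeping exercise starting from the exact formulas established earlier in Section \ref{sec3} combined with the two main input bounds from Theorem \ref{thm1} and Theorem \ref{cor2}.  The only genuine observation is the monotonicity noted above, which lets one trade the cut-off parameter $y_0$ for the truncation length $N$ without any loss and hence derive an estimate on the partial sums of $S_f(l)/l$ and $S_f(l)^2/l$ whose strength mirrors exactly that of the corresponding global bound on $\mathcal{I}$ and $\|F\|_4^4$.
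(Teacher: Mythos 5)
Your argument for the first bound is exactly the paper's: $R(y_0)$ is a one-dimensional integral along the vertical geodesic, $R(y_0) \leq R(0) = \mathcal{I}$ is a genuine monotonicity, and combining \eqref{Ry0} (with $2\pi y_0 = k/N$) with Theorem \ref{cor2} gives the claim. That part is fine.

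Your argument for the second bound contains a real error. You assert $P(y_0) \leq P(0) = \|F\|_4^4$, but $P(0)$ is \emph{not} $\|F\|_4^4$, and in fact $P(0) = \int_0^\infty \int_{-1/2}^{1/2}|F|^4\,\frac{dx\,dy}{y^2}$ diverges: since $|F|$ is $\Gamma$-invariant and bounded, but not identically zero, the factor $\int_0^{1}\frac{dy}{y^2}$ already makes the integral infinite. More conceptually, the strip $\{|x|<1/2,\, y>y_0\}$ contains $O(1 + y_0^{-1})$ copies of the fundamental domain, so the correct inequality (used by the paper) is
\begin{equation*}
P(y_0) \ll \bigl(1 + y_0^{-1}\bigr)\, P(1) \leq \bigl(1 + y_0^{-1}\bigr)\, \|F\|_4^4,
\end{equation*}
which is \cite[Lemma 2.10]{Iw0}. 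This extra factor $1 + y_0^{-1} \asymp 1 + N/k$, when multiplied against $\|F\|_4^4 \ll k^{1/3+\varepsilon}$ and the normalizing factor $k^{1/2}$, is precisely what produces the middle term $N/k^{1/6}$ in the stated bound. The fact that your derivation ``loses'' this term (and you then dismiss it as a harmless extra positive term) is the symptom of the false monotonicity claim: your bound would be genuinely \emph{stronger} than the corollary, which should have been a warning sign. To repair the argument, replace $P(y_0)\leq P(0)$ by the Iwaniec counting lemma quoted above; the rest of the bookkeeping then goes through as you describe.
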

The former bound is nontrivial for $N > k^{3/4 + \varepsilon}$, while the latter is nontrivial for $N > k^{11/12 + \varepsilon}$. This seems the first bound of this type in the literature.\\ 

To prove the first bound in \eqref{eq:ShiftedSum}, we apply \eqref{Ry0} with $2 \pi y_0 = k/N$ and  use the obvious inequality $R(y_0) \leq R(0) =\mathcal{I}$ in combination with Theorem \ref{cor2}.  
For  the second bound in \eqref{eq:ShiftedSum} we apply \eqref{triplesum}  with  $2 \pi y_0 = k/N$ and use the inequality $P(y_0) \ll (1+ y_0^{-1})P(1)$ (\cite[Lemma 2.10]{Iw0})  in combination with Theorem \ref{thm1}.  \\

The shifted convolution sum $T_f(l)$ is a natural object and can be interpreted in terms of Poincar\'e series as we now briefly explain.  Let $P_l$ denote the $l$-th holomorophic Poincar\'e series of weight $2k$ for the group $\Gamma = SL_2(\mathbb{Z})$ as in \cite[Section 3.3]{IwaniecTopics}, that is,
\begin{equation*}
 P_l(z) = \sum_{\gamma \in \bar{\Gamma}_{\infty} \backslash \bar{\Gamma}} j(\gamma, z)^{-2k} e(l \gamma z)
\end{equation*}
and define the normalized function $\widetilde{P}_l$ via
$$P_l(z) = \frac{\sqrt{\Gamma(2k-1)}}{(4 \pi l)^{(2k-1)/2}} \widetilde{P}_l(z).$$  This normalization is natural because by  \cite[(3.24)]{IwaniecTopics}, $\langle \widetilde{P}_l, \widetilde{P}_l \rangle$ is $1$ plus a sum of Kloosterman sums.
For a cusp form $g(z)$ of weight $2k$, we have
\begin{equation*}
 \langle g, \widetilde{P}_l \rangle = \frac{\sqrt{\Gamma(2k-1)}}{(4 \pi l)^{(2k-1)/2}} \widehat{g}(l),
\end{equation*}
where $g(z) = \sum_{l \geq 1} \widehat{g}(l) e(lz)$.  Suppose that $f$ of weight $k$ is given by \eqref{eq:fFourierExpansion}, and let $g = f^2$.  Since
\begin{equation*}
\widehat{g}(l) = a_1^2 \sum_{m+n=l} \lambda_f(m) \lambda_f(n) (4 \pi \sqrt{mn})^{k-1},
\end{equation*}
we obtain 
\begin{equation}
\label{eq:fsquaredPl}
T_f(l) =  \frac{2^{k-1} \sqrt{4\pi l}}{a_1^2 \sqrt{\Gamma(2k-1)}} \langle f^2, \widetilde{P}_l \rangle = k^{\frac{1}{4} + o(1)} l^{\frac{1}{2}} \cdot  \langle f^2, \widetilde{P}_l \rangle. 
\end{equation}

\section{Conditional results} 
Our next aim is  to   show how Conjecture \ref{thm2} follows from   the general recipe of \cite{CFKRS}.    The overall approach is analogous to the derivation of Conjecture 1.6 of \cite{LY} which is slightly different in that it averages  $L(1/2, \mathrm{sym}^2 f \times g)$ over $f$ while here we average $L(1/2, f \times f \times g) = L(1/2, \mathrm{sym}^2 f \times g) L(1/2, g)$ over $g$.  We assume some familiarity with \cite{CFKRS}. The forthcoming calculations are purely formal and only at the end do we arrive at something that makes sense. 
Mimicking the approximate functional equation we write formally
\begin{equation*}
L(1/2 + \alpha, g) = \sum_{l} \frac{\lambda_g(l)}{l^{1/2 + \alpha}} + X_{\alpha} \sum_{l} \frac{\lambda_g(l)}{l^{1/2 - \alpha}},
\end{equation*}
and
\begin{equation*}
L(1/2 + \beta, \mathrm{sym}^2 f \times g) = \sum_{m, n} \frac{\lambda_g(n) A(m,n)}{(m^2 n)^{1/2 + \beta}} + Y_{\beta} \sum_{m, n} \frac{\lambda_g(n) A(m,n)}{(m^2 n)^{1/2 - \beta}}
\end{equation*}
for certain quantities $X_{\alpha}$, $Y_{\beta}$ with $X_{0} = Y_{0} = 1$. As above, $A(m, n)$ denotes the Fourier-Whittaker coefficients of the symmetric square lift of $f$.  
Then by \eqref{L4normformula} we have
\begin{equation}
\label{CFKRS1st}
\| F \|_4^4  = \frac{\pi^3}{2(2k-1)L(1, \mathrm{sym}^2 f)^2} \sum_{g \in B_{2k}}   \sum_{l, m, n} \frac{\lambda_g(l) \lambda_g(n) A(m,n)}{l^{1/2 + \alpha} (m^2 n)^{1/2 + \beta}}+ \dots,
\end{equation}
where the dots indicate three more similar terms.  The Petersson formula \eqref{peter}  expresses this spectral sum as a diagonal term plus a sum of Kloosterman sums.
The \cite{CFKRS} conjecture instructs us to apply this averaging formula to each of the four terms in \eqref{CFKRS1st}, and to retain only the diagonal term.  Thus we obtain
\begin{equation*}
\| F \|_4^4   \sim   \frac{\pi^3}{24 \zeta(2) L(1, \mathrm{sym}^2 f)^2}   \sum_{m,n} \frac{A(m,n)}{m^{1 + 2 \beta} n^{1 + \alpha + \beta}} + \dots,
\end{equation*}
the dots indicating three similar terms obtained by switching the signs on the $\alpha$'s and $\beta$'s. It follows easily from the Hecke relations that 
 the Dirichlet series is
\begin{equation*}
\sum_{m,n} \frac{A(m,n)}{m^{1 + 2 \beta} n^{1 + \alpha + \beta}}  = \frac{L(\mathrm{sym}^2 f, 1 + 2\beta)L(\mathrm{sym}^2 f, 1 + \alpha + \beta)}{\zeta(2 + \alpha +  3 \beta)},
\end{equation*}
see \cite[Prop.\ 6.6.3]{Go}.  At this point we can set all the parameters to $0$, giving
\begin{equation*}
\|F \|_4^4   \sim \frac{\pi^3}{6\zeta(2)^2}  = \frac{6}{\pi}. 
\end{equation*}
Normalizing as in \eqref{norm}, we finally arrive at
 \begin{equation*}
   \int_{\Gamma \backslash \mathbb{H}} y^{2k} |f(z)|^4  \frac{3}{\pi} \frac{dx dy}{y^2}   \sim 2.
\end{equation*}

Next we indicate the changes necessary to derive Conjecture \ref{thm2'}.  Let $\phi$ be as in Conjecture \ref{thm2'}, and suppose $u_j$ form a Hecke-Maa{\ss} orthonormal basis for $SL_2(\mathbb{Z})$ with spectral parameter $t_j$.  Then the spectral decomposition gives
\begin{equation*}
\| \phi \|_4^4 = \int_{\Gamma \backslash \mathbb{H}} |\phi(z)|^4 \frac{dx dy}{y^2} = \sum_{j} |\langle \phi^2, u_j \rangle|^2 + (\text{Eisenstein}).
\end{equation*}
Watson's formula gives for $u_j$ even that
\begin{equation*}
 |\langle \phi^2, u_j \rangle|^2 = \frac{\pi}{2^3} H_T(t_j)   \frac{L(\phi \times \phi \times \overline{u_j}, 1/2)}{L(\mathrm{sym}^2 \phi, 1)^2 L(\mathrm{sym}^2 u_j, 1)}, \quad H_T(t) = \frac{|\Gamma(\frac{\half + 2i T + it}{2})|^2 |\Gamma(\frac{\half + 2i T - it}{2})|^2 |\Gamma(\frac{\half + it}{2})|^4}{|\Gamma(\frac{1 + 2iT}{2})|^4 |\Gamma(\frac{1 + 2it}{2})|^2}.
\end{equation*}
There is a similar formula for the projection of $\phi^2$ onto the Eisenstein series that follows much more elementarily from unfolding.  As above, we then obtain 
\begin{equation*}
 \| \phi \|_4^4 = \frac{3}{\pi} + \frac{\pi}{2^3 L(\mathrm{sym}^2 \phi, 1)^2} \sumeven_{j \geq 1} \frac{H_T(t_j)}{L(1, \mathrm{sym}^2 u_j)} L(1/2, u_j) L(1/2, \mathrm{sym}^2 \phi \times u_j) + (\text{Eis.}),
\end{equation*}
taking into account the constant eigenfunction $u_0 = \sqrt{3/\pi}$.  
Now the Kuznetsov formula plays the role of the Petersson formula.  To this end, we recall that the Kuznetsov formula takes the form
\begin{equation*}
 2 \sumeven_{j \geq 1} \frac{h(t_j) \lambda_j(m) \lambda_j(n)}{L(1, \mathrm{sym}^2 u_j)}  + (\text{Eis.}) = \frac12 \delta_{m=n} \intR h(t) d^*t + (\text{Kloosterman}), \qquad d^*t = \frac{1}{\pi^2} t \tanh(\pi t) dt.
\end{equation*}
We then arrive at the conjecture
\begin{equation*}
 \| \phi \|_4^4 - \frac{3}{\pi} \sim \frac{\pi}{8 \zeta(2)} I, \qquad I =\intR H_T(t) d^*t.
\end{equation*}
We next evaluate $I$.  
Stirling's formula gives that
\begin{equation*}
 H_T(t) \sim 2 \pi \left|T^2 - \frac{t^2}{4}\right|^{-1/2}  \left|\frac{t}{2}\right|^{-1} \exp\big(-\pi q(t,T)\big),
\end{equation*}
where $q(t,T) = |T + \frac{t}{2}| + |T - \frac{t}{2}| - 2T$ which is $0$ for $|t| \leq 2T$ and is $|t-2T|$ for $|t| > 2T$.  Then
\begin{equation*}
 I \sim \frac{8}{\pi} \int_0^{2T} \left(T^2 - \frac{t^2}{4}\right)^{-1/2} dt  + \frac{8}{\pi} \int_{2T}^{\infty} \left( \frac{t^2}{4} - T^2\right)^{-1/2} e^{-\pi(t - 2T)}dt  = 8 + O(T^{-1/2}). 
\end{equation*}
Thus we arrive at the conjecture $\| \phi \|_4^4 \sim \frac{3}{\pi} + \frac{6}{\pi} = \frac{9}{\pi}$ which after renormalization gives \eqref{L4Maass}.

\section{A mean value of central $L$-values}\label{Sec4}
This section is devoted to the proof of Theorem \ref{prop31}. 
 An inspection of the proof  indicates that improving the upper bound into an asymptotic formula with a power saving is, in a vague sense,  almost {\em equivalent} to a subconvexity bound for $L(1/2, \mathrm{sym}^2 f)$ for $k \rightarrow \infty$.  Possibly if one had such an asymptotic formula then one could instead use an amplifier and thus obtain subconvexity for $L(1/2, \mathrm{sym}^2 f \times g)$.\\

In the following we make constant use of $\varepsilon$-convention, i.e.\ the symbol $\varepsilon$ denotes an arbitrarily small positive constant whose value may change from occurrence to occurrence.  We start by expressing $L(1/2, \mathrm{sym}^2 f \times g)$ with $f \in B_k$, $g \in B_{2\kappa}$  by a standard approximate functional equation. 
The local factor at infinity is given by (combine \cite[Theorem 2]{Or} with \eqref{factorization})
\begin{displaymath}
 \Lambda_{k, \kappa}(s) :=  \begin{cases} (2\pi)^{-3s}\textstyle\Gamma(s + k+\kappa - \frac{3}{2})\Gamma(s+\kappa - \frac{1}{2})\Gamma(s+\kappa - k+\frac{1}{2}), & \kappa \geq k,\\
 (2\pi)^{-3s}\textstyle\Gamma(s + k+\kappa - \frac{3}{2})\Gamma(s+\kappa - \frac{1}{2})\Gamma(s+k-\kappa -\frac{1}{2}), & \kappa< k,
 \end{cases}
 \end{displaymath}
and the root number is $1$ if and only if one of the following two cases hold: $\kappa \geq k$ and $\kappa$ even, or  $\kappa < k$ and $\kappa$ odd. Otherwise the root number is $-1$. In the latter case Theorem \ref{prop31} is trivial, and we assume from now on that the root number is $+1$.  In this case we have
\begin{equation}\label{approx}
   L(1/2, \mathrm{sym}^2f  \times g) =2 \sum_{n, m} \frac{\lambda_g(n) A(m, n)}{n^{1/2} m} W(nm^2),
\end{equation}
where $W$ is a smooth weight function satisfying
\begin{equation}\label{W}
  x^j W^{(j)}(x) \ll_{j, A} \left(1 + \frac{x}{k^2}\right)^{-A}
\end{equation}
for any $j, A \geq 0$ if $\kappa = k + O(1)$. For instance, we can take
\begin{displaymath}
  W(x) = \frac{1}{2\pi i} \int_{(1)} \frac{\Lambda_{k, \kappa}(\frac{1}{2} + s)}{\Lambda_{k, \kappa}(\frac{1}{2})} \left(\cos\frac{\pi s}{10 A}\right)^{-60A} x^{-s} \frac{ds}{s}
\end{displaymath}
 (cf.\ e.g. \cite[Section 5.2]{IK}). With later applications in mind, we consider a slightly more general quantity
\begin{displaymath}
 \mathcal{M}_f(r ) :=  \frac{12}{2k-1} \sum_{g \in B_{2k}}  \lambda_g(r ) \frac{    L(1/2, \mathrm{sym}^2f  \times g)  }{  L(1, \mathrm{sym}^2 g)} 
 \end{displaymath}
 for an integer $0 < r < k^{1/10}$. By positivity and Deligne's bound we have
 \begin{equation}\label{est}
   \mathcal{M}_f( r)  \ll r^{\varepsilon} \mathcal{M}_f(1). 
 \end{equation}
  We can now apply the Petersson formula \eqref{peter} getting $\mathcal{M}_f(r ) = \mathcal{M}_f^{(1)}(r ) + \mathcal{M}_f^{(2)}( r)$ where $\mathcal{M}_f^{(1)}( r)$ is the diagonal term and $\mathcal{M}_f^{(2)}(r )$ is the off-diagonal contribution.  We have
 \begin{equation}\label{m1}
 \begin{split}
   \mathcal{M}^{(1)}_f(r )&= \frac{2}{\zeta(2)} \sum_{m} \frac{A(m, r)}{r^{1/2}m} W(m^2) \ll   k^{\varepsilon}   \end{split}  
 \end{equation}
 by Deligne's bound $A(m, r) \ll (rm)^{\varepsilon}$ (or Iwaniec's method \cite{IwaniecSpectralgrowth}) and \eqref{W}. We proceed to analyze the off-diagonal contribution
 \begin{equation}\label{m2}
   \mathcal{M}^{(2)}_f(r ) =  \frac{4\pi i^{k}}{\zeta(2)} \sum_{n, m, c} \frac{ A(m, n)}{n^{1/2} m} W(nm^2)  \frac{S(n, r, c)}{c} J_{2\kappa-1}\left(\frac{4\pi \sqrt{nr}}{c}\right).  
  \end{equation}
The multiple sum is absolutely convergent. 
 By \eqref{W} we can truncate the $n$-sum at $n\leq k^{2+\varepsilon} m^{-2}$. We insert smooth partitions of unity for the $n$ and $c$-sums, and are left with bounding 
\begin{displaymath}
  \mathcal{M}^{(2)}_f(r, N, C) =  \sum_{m, c} \frac{\Omega_1(c/C)}{mCN^{1/2}}    \left. \sum_{d \, (c )}\right.^{\ast}  e\left(\frac{dr}{c}\right)\sum_{n}  A(m, n)   e\left(\frac{\bar{d}n}{c}\right) \Omega_2\left(\frac{n}{N}\right)J_{2\kappa-1}\left(\frac{4\pi \sqrt{nr}}{c}\right)
\end{displaymath}
for
\begin{equation}\label{ranges}
   N  \leq \frac{k^{2+\varepsilon}}{m^2}, \quad C\leq 100 \frac{\sqrt{Nr}}{k}, 
\end{equation}
  the latter truncation coming from the decay properties of the Bessel function near 0. Here $\Omega_1$ and $\Omega_2$ are  fixed, smooth, compactly supported weight functions. We remark   that   \eqref{ranges} implies 
\begin{equation}\label{ranges1}
   k^2r^{-1} \leq N \leq k^{2+\varepsilon}, \quad   cm \ll r^{1/2} k^{\varepsilon}.
 \end{equation}  
We apply the Voronoi formula \eqref{Vor} with 
\begin{equation}\label{littlepsi}
   \psi(n)= \psi_{N, c, r}(n) =  \Omega_2\left(\frac{n}{N}\right) J_{2\kappa-1}\left(\frac{4\pi \sqrt{nr}}{c}\right). 
\end{equation}   
We define $\Psi^{\pm}$ as in \eqref{Psi}.  Then the Voronoi formula \eqref{Vor}   implies
\begin{equation}\label{aftervor}
  \mathcal{M}^{(2)}_f(r, N, C) = \sum_{m, c} \frac{\Omega_1(c/C)}{mCN^{1/2}}  c \sum_{n_1 \mid c} \sum_{\pm} \sum_{n_2} \frac{A(n_2, n_1)}{n_1n_2 }   \left. \sum_{d \, (c )}\right.^{\ast}  e\left(\frac{dr}{c}\right) S(md, \pm n_2, c/n_1) \Psi^{\pm}\left(\frac{n_2n_1^2}{c^3 m}\right).
\end{equation}
We need the following two technical lemmas.

\begin{lemma}\label{lem32} With $\psi$ as in \eqref{littlepsi} and under the assumption \eqref{ranges} we have
\begin{displaymath}
  \Psi^{\pm}(x) \ll_{A, \varepsilon}k^{\varepsilon}\left(\frac{x^{1/2} c}{r^{1/2}} +  \frac{ xc^2}{r}\right) \left(1+\frac{x}{Xk^{\varepsilon} }\right)^{-A}, \quad X := \frac{N^{1/2}r^{3/2}}{ c^{3}}\,\, \,\,(\gg k^{2+o(1)}). 
\end{displaymath}
\end{lemma}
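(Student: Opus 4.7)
The proof proceeds by an explicit evaluation of $\tilde\psi(-s)$ that exhibits a precise cancellation with the gamma factors in $G^\pm(s)$. Substituting $u = 4\pi\sqrt{nr}/c$ in the defining integral for $\tilde\psi(-s)$ yields
\begin{equation*}
\tilde\psi(-s) = 2\left(\frac{16\pi^2 r}{c^2}\right)^{\!s} M(s), \qquad M(s) = \int_0^\infty \Omega_2\!\left(\frac{c^2 u^2}{16\pi^2 rN}\right) J_{2\kappa-1}(u)\, u^{-2s-1}\, du.
\end{equation*}
To separate the cutoff from the Bessel function, I would write $\Omega_2(y) = \frac{1}{2\pi i}\int_{(0)} \tilde{\Omega}_2(w)\, y^{-w}\, dw$ with $\tilde\Omega_2$ of rapid decay, and invoke the classical formula $\int_0^\infty J_\nu(u) u^{s-1} du = 2^{s-1}\Gamma((\nu+s)/2)/\Gamma((\nu-s)/2+1)$. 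This expresses $M(s)$ as a contour integral in $w$ whose integrand contains the gamma ratio $\Gamma(\kappa - \tfrac12 - s - w)/\Gamma(\kappa + \tfrac12 + s + w)$ together with the weight $\tilde\Omega_2(w) (rN/c^2)^w$.

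Inserting this into \eqref{Psi}, I would apply the Legendre duplication formula to collapse the $k$-dependent gamma factors of $G^\pm(s)$ into $2^{-1-2s}\Gamma(\kappa+s)/\Gamma(\kappa - 1 - s)$ (for the case $\kappa \geq k$; the other case is handled analogously). Combined with the Bessel ratio, Stirling's formula together with the hypothesis $\kappa = k + O(1)$ gives that the resulting product is $\asymp 2^{-1-2s}\cdot k^{-2w}$ uniformly for $|s|, |w| \ll k^\varepsilon$. Thus all $k$-dependence of the weight cancels, and (up to tails beyond $|w| \gg k^\varepsilon$ controlled by the rapid decay of $\tilde\Omega_2$) the expression $\Psi^\pm(x)$ reduces to a clean $s$-contour integral
\begin{equation*}
\Psi^\pm(x) \approx \int_{(1)} \left(\frac{C r}{c^2 x}\right)^{\!s} B^\pm(s)\, \frac{ds}{2\pi i},
\end{equation*}
where $C$ is an absolute constant and $B^\pm(s) = \Gamma((2+s)/2)/\Gamma((1-s)/2) \mp i\, \Gamma((1+s)/2)/\Gamma(-s/2)$ is the archimedean remainder of $G^\pm(s)$.

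To obtain the polynomial bounds, I shift the $s$-contour to $\Re s = -\tfrac12$ and to $\Re s = -1$; these produce the factors $(c^2 x/r)^{1/2} = c x^{1/2}/r^{1/2}$ and $c^2 x/r$, respectively, with $B^\pm(s)$ remaining of moderate size along each contour (residues from the poles of $B^\pm$ at $s = -1$ and $s = -2$ contribute only lower-order terms absorbed into the same bound). For the decay regime $x > Xk^\varepsilon$, I shift the contour far to the right: since $B^\pm(s)$ grows at most polynomially while $(Cr/(c^2 x))^s$ becomes arbitrarily small for $\Re s$ large once combined with the effective weight $(rN/c^2)^{O(k^\varepsilon)}$ inherited from the $w$-integral, this yields the claimed $(1 + x/(Xk^\varepsilon))^{-A}$ decay. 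The threshold $X = N^{1/2} r^{3/2}/c^3$ is the natural scale separating the transitional and oscillatory regimes of the Bessel function in \eqref{littlepsi}.

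The main obstacle is making the Stirling cancellation uniform. The heuristic identity $\Gamma(\kappa+s)\Gamma(\kappa-\tfrac12-s-w)/\bigl(\Gamma(\kappa-1-s)\Gamma(\kappa+\tfrac12+s+w)\bigr) \asymp k^{-2w}$ holds only when $|s|$ and $|w|$ are polylogarithmically small compared to $k$; for larger $|s|$ the ratio acquires a nontrivial oscillatory $s$-dependence that must be absorbed into the factor $(r/(c^2 x))^s$ without spoiling the final bound. Managing this requires a careful uniform Stirling expansion rather than just the leading asymptotic in the transitional range, and this is the technical heart of the argument.
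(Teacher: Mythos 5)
Your opening moves match the paper's: computing $\tilde\psi(-s)$ via the Mellin transform of the Bessel function (the paper cites [GR, 6.561.14]) and recognizing that, after applying duplication/Stirling to $G^\pm(s)$, the $k$-dependent gamma factors from the Voronoi kernel nearly cancel against those coming from the Petersson Bessel function when $\kappa = k + O(1)$. The paper's manifestation of this is the identity for $G^\pm(-1/2+it)$ preceding \eqref{g2} and the observation that $H_k(t,w) = \frac{\Gamma(k-\frac12+it)}{\Gamma(k-\frac12-it)}\frac{\Gamma(\kappa-it-iw)}{\Gamma(\kappa+it+iw)}$ is essentially constant for small $w$, which is what you describe. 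The large-$x$ decay via rightward contour shifts is also in line with the paper.

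The endgame, however, has a genuine gap. After the cancellation you reduce (correctly) to an oscillatory $t$-integral on $\Re s = -1/2$ over a truncated range $|t| \leq T$ with $T \asymp N^{1/2}r^{1/2}k^\varepsilon/c$, with an integrand of magnitude $\asymp 1$. "Shifting to $\Re s = -1/2$ and to $\Re s = -1$" with "$B^\pm$ of moderate size" does not produce the two terms of the claimed bound: the trivial estimate of the $t$-integral at $\Re s = -1/2$ gives roughly $T$, which is far worse than either term, and $(xc^2/r)$ is not the value of a contour shift but rather the location of a stationary point. Both terms actually come from the \emph{single} contour $\Re s = -1/2$: the first term $x^{1/2}c/r^{1/2}$ arises from the short range $|t|\ll k^\varepsilon$ (trivial bound times the prefactor), and the second term $xc^2/r$ arises because the remaining integrand has a phase $\exp\big(it\log\frac{|t|r}{2\pi e c^2 x}\big)$ with a stationary point at $|t_0| \asymp xc^2/r$; the stationary phase bound $(xc^2/r)^{1/2}$ times the prefactor $x^{1/2}c/r^{1/2}$ then gives the second term. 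The paper carries this out using the integration-by-parts Lemma~\ref{integrationbyparts} (away from $t_0$) and the stationary phase Proposition~\ref{statphase} (near $t_0$). Your proposal omits this square-root-cancellation step entirely, and that is where the real work lies — not in uniformizing Stirling, which is comparatively routine here since $|t|$ and $|w|$ stay polynomially small in $k$. You would need to supply a stationary phase argument (or an equivalent device) to close the proof.
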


In our situation $ x \geq 1/(c^3m)$, hence $xc^2/r \geq 1/(mcr)$. Hence \eqref{ranges1} implies  the slightly simpler bound
\begin{equation}\label{simpler}
  \Psi^{\pm}(x) \ll_{A, \varepsilon}k^{\varepsilon} \frac{xc^2}{r^{1/4}}  \left(1+\frac{x}{Xk^{\varepsilon} }\right)^{-A}, \qquad x \geq \frac{1}{c^3m}. 
  \end{equation}

\begin{lemma}\label{lem33} We have
\begin{displaymath}
  \Bigl|\left. \sum_{d \, (c )}\right.^{\ast}  e\left(\frac{dr}{c}\right) S(md, \pm n_2, c/n_1)\Bigr| \leq \tau(c )  c (c, m) 
 \end{displaymath}  
 where $\tau(c )$ denotes the number of divisors of $c$.
\end{lemma}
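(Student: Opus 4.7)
The plan is to open the Kloosterman sum, swap the order of summation over $d$ to the inside, and recognize the resulting inner sum as a Ramanujan sum that is bounded by a gcd.

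Writing $c' = c/n_1$ and expanding
\[
S(md, \pm n_2, c') = \sum_{\substack{y \pmod{c'} \\ (y,c')=1}} e\!\left(\frac{mdy \pm n_2 \bar y}{c'}\right),
\]
I would swap the order of summation. Since $e(mdy/c') = e(mdyn_1/c)$, the inner sum over $d$ collapses to the Ramanujan sum
\[
\sum_{d \pmod c}^{*} e\!\left(\frac{d(r+myn_1)}{c}\right) = c_c(r+myn_1),
\]
which satisfies $|c_c(k)| \leq (c,k)$. Hence the task reduces to proving
\[
\sum_{y \pmod{c'}} (c,\, r+myn_1) \leq \tau(c)\, c\, (c,m).
\]

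To bound this gcd sum I would apply the identity $(c,k)=\sum_{\delta \mid (c,k)} \phi(\delta)$ and interchange order of summation, so that the sum becomes $\sum_{\delta \mid c} \phi(\delta) N(\delta)$, where $N(\delta)$ counts $y \pmod{c'}$ with $mn_1 y \equiv -r \pmod{\delta}$. Since $\delta \mid c = n_1 c'$, the congruence is well defined on $\mathbb{Z}/c'\mathbb{Z}$, and $N(\delta)$ is at most the kernel size of the homomorphism $y \mapsto mn_1 y$ from $\mathbb{Z}/c'\mathbb{Z}$ to $\mathbb{Z}/\delta\mathbb{Z}$, namely $c'\gcd(mn_1,\delta)/\delta$. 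Thus the problem is reduced to the purely multiplicative estimate
\[
\sum_{\delta \mid c} \frac{\phi(\delta) \gcd(mn_1,\delta)}{\delta} \leq \tau(c)\, \gcd(c,mn_1).
\]

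The main technical step is this last inequality. Both sides are multiplicative in $c$, so it suffices to check at prime powers $p^a$, splitting into the two cases $v_p(mn_1) \geq a$ and $v_p(mn_1) < a$; in either case a short direct computation gives the prime-power bound $f(p^a) \leq (a+1)\, p^{\min(a,\, v_p(mn_1))}$. Combining this with the identity $\gcd(c, mn_1) = n_1 \gcd(c',m)$ (which holds since $n_1 \mid c$) and the trivial $\gcd(c',m) \leq \gcd(c,m)$ yields the desired bound $\tau(c) c (c,m)$. The rest of the argument is formal.
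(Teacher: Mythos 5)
Your proposal is correct and follows essentially the same route as the paper: swap the order of summation to produce a Ramanujan sum $r_c(r+myn_1)$, then bound the resulting divisor-weighted count of residues. The only cosmetic difference is that the paper expands the Ramanujan sum directly via $r_c(k)=\sum_{f\mid c} f\mu(c/f)\mathbf 1_{f\mid k}$ and bounds the ensuing congruence count, whereas you first apply $|r_c(k)|\le (c,k)$ and then re-expand the gcd via $\sum_{\delta\mid (c,k)}\phi(\delta)$ — a slightly longer but equivalent path leading to the same residue count and the same final bound $\tau(c)\,c\,(c,m)$.
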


Coupling these results with Deligne's bound, it follows by straightforward estimates for \eqref{aftervor} and \eqref{ranges1} that
  $\mathcal{M}_f^{(2)}(1, N, C) \ll k^{\varepsilon}$. 
This concludes the proof of  Theorem \ref{prop31}. It remains to prove the two lemmas.\\

{\it Proof of Lemma \ref{lem32}.} By \cite[6.561.14]{GR} we have
\begin{displaymath}
  \tilde{\psi}(-s) = \frac{1}{2\pi i} \int_{(\nu)} \tilde{\Omega}_2(u) \left(\frac{2\pi\sqrt{r}}{c}\right)^{2s+2u} \frac{\Gamma(\kappa - s - u - \frac{1}{2})}{\Gamma(\kappa+s+u+\frac{1}{2})} N^u du
\end{displaymath}
where $\tilde{\Omega}_2$ denotes the Mellin transform of $\Omega_2$, which is an entire function that is rapidly decaying on vertical lines. Here and in the following we write $u = \nu+ iw$, and as usual  $s = \sigma + it$. We conclude
\begin{displaymath}
    \Psi^{\pm}(x) = \frac{1}{2\pi^{3/2}}  \int_{(\sigma)}  \int_{(\nu)}  \tilde{\Omega}_2(u) \frac{(2\sqrt{r}/c)^{2s+2u}}{\pi^{s-2u}}   G^{\pm}(s) \frac{\Gamma(\kappa - s - u - \frac{1}{2})}{\Gamma(\kappa+s+u+\frac{1}{2})} N^u  x^{-s}  \frac{ds}{2\pi i} \frac{du}{2\pi i}
\end{displaymath}
with $G^{\pm}$ as in \eqref{G}.   A simple version of Stirling's formula shows
\begin{equation}\label{b1}
  G^{\pm}(s) \ll_{\sigma} (k+|t|)^{2\sigma+1} (1+|t|)^{\sigma+\frac{1}{2}}
\end{equation}  
and 
\begin{equation}\label{b2}
   \frac{\Gamma(\kappa - s - u - \frac{1}{2})}{\Gamma(\kappa+s+u+\frac{1}{2})}  \ll_{\sigma, \nu} (k + |t +w|)^{-2\sigma-2\nu - 1}.
\end{equation}
for any fixed $ \sigma, \nu > -1$, and we also recall $\tilde{\Omega}_2(u) \ll_A (1+|w|)^{-A}$. In particular, the double integral is absolutely convergent for  $2\nu > \sigma + 3/2$.\\

We first show that $\Psi^{\pm}$ is rapidly decaying for  $x > X$. To this end we shift the two contours to $\Re s = A$ and $\Re u = A/2 + 3/4 + \varepsilon$ for some large $A$ and small $\varepsilon > 0$. By trivial bounds together with \eqref{b1} and \eqref{b2}, we obtain
\begin{displaymath}
  \Psi^{\pm}(x)  \ll_{\varepsilon, A}      \frac{(Nr)^{3/4}k^{\varepsilon}}{c^{3/2}}   \left(\frac{xc^3}{N^{1/2}r^{3/2}}\right)^{-A}. 
\end{displaymath}
Changing $A$ and $\varepsilon$ if necessary, this is sufficient in the range $x \geq Xk^{\varepsilon}$. \\

Next we investigate the range $x \leq Xk^{\varepsilon}$. Here we shift the $s$-contour to $\Re s = -1/2$. Shifting the $u$-contour to the far right, we see that we can truncate the $s$-integration at 
\begin{displaymath}
  |t| \leq T := \frac{N^{1/2} r^{1/2} k^{\varepsilon}}{c} = \frac{Xc^2k^{\varepsilon}}{r}
 \end{displaymath} 
at the cost of a negligible error. Having done the truncation (in a smooth fashion) we shift the contour back to $\Re u = 0$, and truncate the $u$-integration at $|w| \leq k^{\varepsilon}$ again at the cost of a negligible error. Hence we see that 
\begin{equation}\label{integral}
  \Psi^{\pm}(x) \ll \frac{k^{\varepsilon} x^{1/2}c}{r^{1/2}} \Biggl(\sup_{|w| \leq k^{\varepsilon}} \Bigl| \int_{-\infty}^{\infty} \omega(t) \left(\frac{4r}{\pi c^2 x}\right)^{it}    G^{\pm}\left(-\frac{1}{2} + it\right) \frac{\Gamma(\kappa -it-iw)}{\Gamma(\kappa+it+iw)}  dt\Bigr| + O(k^{-10})\Biggr)
\end{equation}
where $\omega$ is a smooth function with $\omega(t) = 1$ for $|t| \leq T$, $\omega(t) = 0$ for $|t| \geq 2 T$ and $\omega^{(j)}(t) \ll_j |t|^{-j}$ for all $j \in \Bbb{N}_0$. 

We need to show square-root cancellation in the $t$-integral which follows from the stationary phase method. The argument is greatly simplified by the following observation: by well-known properties of the Gamma-function we have 
\begin{displaymath}
  G^{\pm}(-1/2 + it) =  \mp i  \frac{2^{1/2 - 3 i t} \Gamma(1/2 + it) \exp(\pm \frac{i\pi}{4}(1+2it))\Gamma(k-1/2 + it)}{\sqrt{\pi}\Gamma(k-1/2 - it)}. 
\end{displaymath}
Hence the $t$-integral in  \eqref{integral} contains the term
\begin{displaymath} 
   H_k(t, w)  = \frac{\Gamma(k-1/2 + it)}{\Gamma(k - 1/2 - it)}  \frac{\Gamma(\kappa- it-iw)}{\Gamma(\kappa + it+iw)}
\end{displaymath}
which is almost constant (for small $w$). We see now the phenomenon mentioned in the introduction that large parts of the Voronoi kernel $G^{\pm}$ are almost cancelled by the Mellin transform of the Bessel-function from Petersson's formula, as long as $k \approx \kappa$.   We note that Stirling's formula implies
 \begin{equation}\label{g2}
  \mp i \frac{ 2^{\frac{1}{2}-3it}}{\sqrt{\pi}} \Gamma(1/2+it)\exp\Bigl(\pm \frac{i\pi}{4}(1+2it)\Bigr) =   \exp\Bigl(it \log \frac{|t|}{8e}\Bigr) v_{\pm}(t)  + O\big((1+|t|)^{-10}\big)
  \end{equation}  
for a smooth function $v_{\pm}$ satisfying $v_{\pm}^{(j)}(x) \ll   x^{-j}$ for all $j \in \Bbb{N}_0$. Putting it all together, the integral in \eqref{integral} equals
\begin{equation}\label{int1}
   \int_{-\infty}^{\infty} \omega(t)  v_{\pm}(t) H_k(t, w)   \exp\Bigl(it \log \frac{|t|r}{2\pi e c^2 x}\Bigr)   dt + O(1). 
\end{equation}
Since
\begin{displaymath}
  \frac{d^n}{dz^n} \frac{\Gamma'(z)}{\Gamma(z)}  \ll |z|^{-n}
\end{displaymath}
for $n \geq 1$, it is not hard to see that
\begin{displaymath}
  \frac{\partial^n}{\partial t^n} H_{k}(t, w) \ll \left(\frac{(1+|w|)}{|t|}\right)^{n} \ll  \left(\frac{k^{\varepsilon}}{|t|}\right)^{n}
\end{displaymath}
if $\kappa = k + O(1)$. 

Now we integrate trivially in \eqref{int1} for $|t| \leq k^{\varepsilon}$. There is one stationary point at $|t _0| = 2\pi x c^2/r$. We cut the remaining integral in $O(k^{\varepsilon})$ subintegrals over (smoothed) dyadic intervals of the form $[V_1,2 V_1]$ 
and assume without loss of generality that $t_0$ is the midpoint of one of the intervals. For all regions not containing $t_0$ we apply integration by parts in the form of Lemma \ref{integrationbyparts} below with $X = Y = 1$, $U  = V$, $Q = V_1$, $ R \asymp k^{-\varepsilon}$ to see that these are negligible. For the region containing $t_0$ we apply  Proposition \ref{statphase}  with $X=1$, $Y=Q = xc^2/r$ and  $V \asymp Q/k^{\varepsilon}$, so that altogether \eqref{int1} is at most $\ll k^{\varepsilon} + (xc^2/r)^{1/2}$.   This completes  the proof of Lemma \ref{lem32}.  
$\square$\\

{\it Proof of Lemma \ref{lem33}.} This is a straightforward computation. Interchanging sums, we find
\begin{displaymath}
\begin{split}
&  \left. \sum_{d \, (c )}\right.^{\ast}  e\left(\frac{dr}{c}\right) S(md, \pm n_2, c/n_1) = \underset{h \, (c/n_1)}{\left.\sum\right.^{\ast}} e\left(\frac{\pm n_2 \bar{h}}{c/n_1}\right) r_c(r + mhn_1) \\
  = & \sum_{f \mid c} f \mu\left(\frac{c}{f}\right)  \underset{\substack{h \, (c/n_1)\\ mhn_1 \equiv -r \, (f)}}{\left.\sum\right.^{\ast}}e\left(\frac{\pm n_2 \bar{h}}{c/n_1}\right), 
\end{split}  
\end{displaymath}
and this is trivially bounded by
\begin{displaymath}
  \sum_{f \mid c} f \cdot \frac{c}{n_1} \cdot \frac{(f, mn_1)}{f} \leq \tau(c ) c (c, m),
\end{displaymath}
as claimed. \hfill $\square$

 \section{Proof of Theorem \ref{thmSK}}

The proof of Theorem \ref{thmSK} uses heavily the analysis of  the preceding section. For odd $k$ we consider the quantity
\begin{equation}\label{quant}
   \mathcal{S}  := \frac{12}{2k-1} \sum_{g \in B_{2k}}  \frac{\pi^2}{15 \, L(3/2, g) L(1, \mathrm{sym}^2 g)}\cdot  \frac{12}{k} \sum_{f \in B_{k+1}} L\left(\frac{1}{2}, \mathrm{sym}^2 f \times g\right). 
\end{equation}
The crucial point is to sum over $g$ first and postpone the $f$-average to the last possible moment. This different order of summation is the key to improving the result of \cite{LY}.  We will apply Theorem \ref{prop31} several times with $k+1$ (which is even) in place of $k$. We recall \eqref{approx} and
\begin{displaymath}
  \frac{1}{L(3/2, g)} = \sum_{(r, s) = 1} \frac{\mu(r )\mu(s)^2 \lambda_g(r )}{r^{3/2} s^3}
\end{displaymath}
 and insert both expressions into \eqref{quant}. We use the Petersson formula \eqref{peter} for the $g$-sum and obtain 
 \begin{displaymath}
 \mathcal{S} =   \frac{\pi^2}{15} \cdot \frac{12}{k} \sum_{f \in B_{k+1}}  \frac{L(1, \mathrm{sym}^2f)}{L(1, \mathrm{sym}^2f)}\sum_{(r, s) = 1} \frac{\mu(r ) \mu(s)^2}{r^{3/2} s^3} \bigl(\mathcal{M}^{(1)}_f(r ) + \mathcal{M}^{(2)}_f( r)\bigr),
 \end{displaymath}
where $\mathcal{M}^{(1)}_f(r )$ and $\mathcal{M}^{(2)}_f(r )$ were defined in \eqref{m1} and \eqref{m2}. We have inserted a redundant fraction   in order to ease the application of the Petersson formula later. The Dirichlet series for $L(1, \mathrm{sym}^2 f)$ is not absolutely convergent, but for almost all $f$ we can represent this value by a short Dirichlet polynomial. More precisely, the following holds:
\begin{lemma}\label{lem51}
Given $\delta_1, \delta_2 > 0$, there is $\delta_3 > 0$ such that
\begin{equation}\label{l1}
  L(1, \mathrm{sym}^2 f) = \sum_{d_1,d_2} \frac{\lambda_f(d_1^2)}{d_1d_2^2} \exp\left(-\frac{d_1d_2^2}{k^{\delta_1}}\right) + O(k^{-\delta_3})
\end{equation}
for all but $O(k^{\delta_2})$ cusp forms $f \in B_{k+1}$.
\end{lemma}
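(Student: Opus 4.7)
The plan is to use Mellin inversion and a contour shift to express $L(1,\mathrm{sym}^2 f)$ as the smoothed sum in \eqref{l1} plus a short vertical integral just to the left of $s=0$, and then to show that this integral is small for all but $O(k^{\delta_2})$ forms via a moment bound and Chebyshev's inequality.

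With $X=k^{\delta_1}$, the Mellin identity $e^{-y}=(2\pi i)^{-1}\int_{(2)}\Gamma(s)y^{-s}\,ds$ together with the absolutely convergent expansion $L(1+s,\mathrm{sym}^2 f)=\sum_{d_1,d_2}\lambda_f(d_1^2)/(d_1^{1+s}d_2^{2+2s})$ yields
\[
\sum_{d_1,d_2}\frac{\lambda_f(d_1^2)}{d_1 d_2^2}\exp\Bigl(-\tfrac{d_1 d_2^2}{X}\Bigr)=\frac{1}{2\pi i}\int_{(2)}L(1+s,\mathrm{sym}^2 f)\,\Gamma(s)\,X^s\,ds.
\]
Shifting the contour to $\Re s=-\sigma$ for a fixed $\sigma\in(0,1/2)$ picks up the simple pole at $s=0$ of residue $L(1,\mathrm{sym}^2 f)$, leaving an error
\[
E(f) \;=\; \frac{1}{2\pi i}\int_{(-\sigma)}L(1+s,\mathrm{sym}^2 f)\,\Gamma(s)\,X^s\,ds \;\ll\; k^{\varepsilon}X^{-\sigma}\max_{|t|\le k^{\varepsilon}}|L(1-\sigma+it,\mathrm{sym}^2 f)|,
\]
after truncating by the rapid decay of $\Gamma$ in the imaginary direction.

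To bound $E(f)$ for most $f$, I would use Chebyshev's inequality against a $2m$-th moment estimate of the shape
\[
\sum_{f\in B_{k+1}}\frac{|L(1-\sigma+it,\mathrm{sym}^2 f)|^{2m}}{L(1,\mathrm{sym}^2 f)}\;\ll\; k^{1+m\eta(\sigma)+\varepsilon}(1+|t|)^{A},
\]
with $\eta(\sigma)\to 0$ as $\sigma\to 0^+$. This is derived by expanding $|L|^{2m}$ through the approximate functional equation (effective sum length $\asymp k^m$, since the analytic conductor of $\mathrm{sym}^2 f$ is $\asymp k^2$), applying the Petersson formula on $B_{k+1}$, and controlling the resulting diagonal by a Rankin-Selberg Dirichlet series (convergent for $\sigma<1/2$) and the off-diagonal Kloosterman sums via Weil's bound combined with the sharp transition of $J_k$ at $x\asymp k$, much as in the analysis of $\mathcal{M}_f^{(2)}$ in Section~\ref{Sec4}. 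Invoking the Hoffstein-Lockhart lower bound $L(1,\mathrm{sym}^2 f)\gg(\log k)^{-1}$ converts this into $\sum_f|E(f)|^{2m}\ll k^{1+m\eta(\sigma)-2m\sigma\delta_1+\varepsilon}$, and Chebyshev then gives the bad-set bound $\#\{f:|E(f)|>k^{-\delta_3}\}\ll k^{1+m\eta(\sigma)-2m\sigma\delta_1+2m\delta_3+\varepsilon}$.

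For any prescribed $\delta_1,\delta_2>0$, the proof concludes by choosing $\sigma$ so that $\eta(\sigma)<2\sigma\delta_1$, then $\delta_3>0$ small and $m$ large enough to bring the exponent below $\delta_2$. The main obstacle is the diagonal moment analysis at the edge of the critical strip, specifically the ability to take $\eta(\sigma)$ arbitrarily small: this is where the absolute convergence of the relevant Rankin-Selberg Dirichlet series for $\sigma<1/2$ is essential, and where the off-diagonal Petersson analysis at weight $k+1$ must be handled with some care when $m$ is large, exploiting the transition of $J_k$ to suppress the Kloosterman contribution.
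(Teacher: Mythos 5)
Your high-level strategy (Mellin inversion, contour shift past the pole at $s=0$, then bound the residual integral $E(f)$ for most $f$) matches the paper's up to the point where $E(f)$ must be controlled, but there you diverge: the paper invokes the Lau--Wu zero-density theorem for $L(s,\mathrm{sym}^2 f)$, which gives that all but $O(k^{31\eta})$ forms $f\in B_{k+1}$ have a zero-free region $\mathcal{R}(\eta)$, and then standard complex analysis (Luo's lemma) yields $L(s,\mathrm{sym}^2 f)\ll k^{\varepsilon}$ in a slightly smaller region, making the contour integral along the boundary of $\mathcal{R}(\eta/2)$ negligible. You instead propose a $2m$-th moment estimate combined with Chebyshev.

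The gap is in the asserted moment bound $\sum_{f}|L(1-\sigma+it,\mathrm{sym}^2 f)|^{2m}/L(1,\mathrm{sym}^2 f)\ll k^{1+m\eta(\sigma)+\varepsilon}$ for \emph{arbitrary} $m$. Because $\delta_1,\delta_2$ are given and may be small, your argument requires $m$ to be taken large (of order $(2\sigma\delta_1-\eta(\sigma)-2\delta_3)^{-1}$), yet the off-diagonal Petersson analysis you sketch does not close for $m\ge 2$. Expanding the $2m$-th moment via the approximate functional equation yields Dirichlet-variable pairs of size up to $k^{m}$, so the Bessel factor $J_{k}(4\pi\sqrt{d_1\cdots d_m\, d_{m+1}\cdots d_{2m}}/c)$ is non-negligible for $c$ as large as $k^{m-1}$; the Weil bound and the transition of $J_k$ alone give no saving against the family size $k$ once $m\ge 2$, since the number of admissible moduli $c$ grows like a positive power of $k$. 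Obtaining a ``Lindel\"of-on-average'' bound with power saving in the size of the bad set is essentially equivalent to the log-free zero-density estimate the paper cites; it is a genuine theorem, not a routine consequence of Petersson plus Weil. If you wish to rescue the moment route you would need either to restrict to the second moment with a power saving (which is not enough to take $\delta_2$ arbitrarily small) or to import a mollified high-moment estimate, which is precisely the content of \cite{LW}.
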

\begin{proof}
This follows from the zero-density estimate \cite[Theorem 1]{LW}: given $0 < \eta < 1/100$, define $$\mathcal{R}(\eta) := \{s \in \Bbb{C} \mid \sigma \geq 1 - \eta, |t| \leq 100k^{\eta}\} \cup \{s \in \Bbb{C} \mid \sigma \geq 1\}$$ and $B^{+}_{k+1}(\eta) := \{ f \in B_{k+1} \mid L(s, \mathrm{sym}^2f) \not = 0 \text{ for } s \in \mathcal{R}(\eta)\}$. Then $\#(B_{k+1} \setminus B_{k+1}^{+}(\eta)) \ll k^{31\eta}$ by \cite[(1.11)]{LW}. For $f \in B_{k+1}^{+}(\eta)$ it follows by standard complex analysis (see e.g.\ \cite[Lemma 2]{Lu}) that  $L(s, \mathrm{sym}^2f) \ll k^{\varepsilon}$ for $s \in \mathcal{R}(\eta/2)$. Let $\mathcal{C}(\eta)$ denote the boundary of $\mathcal{R}(\eta/2)$. Then
\begin{displaymath}
   L(1, \mathrm{sym}^2 f) = \sum_{d_1,d_2} \frac{\lambda_f(d_1^2)}{d_1d_2^2} \exp\left(-\frac{d_1d_2^2}{k^{\delta_1}}\right)  - \int_{\mathcal{C}(\eta)} L(s, \mathrm{sym}^2f) \Gamma(s-1) k^{\delta_1(s-1)} ds
\end{displaymath}
for $f \in B_{k+1}^+(\eta)$, and the integral is $O(k^{-\delta_1 \eta/2 + \varepsilon})$. The lemma follows with $\delta_3 < \delta_1\delta_2/62$. 
 \end{proof}

By Lemma \ref{lem51} we obtain
 \begin{equation}\label{s1}
 \begin{split}
 \mathcal{S} &=   \frac{\pi^2}{15} \cdot \frac{12}{k} \sum_{f \in B_{k+1}}  \frac{1}{L(1, \mathrm{sym}^2f)}\sum_{d_1,d_2} \frac{\lambda_f(d_1^2)}{d_1d_2^2} \exp\left(-\frac{d_1d_2^2}{k^{\delta_1}}\right) \\
 &\times \sum_{(r, s) = 1} \frac{\mu(r ) \mu(s)^2}{r^{3/2} s^3} \bigl(\mathcal{M}^{(1)}_f(r ) + \mathcal{M}^{(2)}_f( r)\bigr) +O\bigl(k^{-\delta_3+\varepsilon} + k^{\delta_2 - 1+\varepsilon}\bigr).
\end{split} 
 \end{equation}
 The error term comes from two sources: the error in Lemma \ref{lem51} and the bad forms $f$ for which \eqref{l1} does not hold in which case we estimate trivially using \eqref{est} and Theorem \ref{prop31}. We proceed to estimate the two main terms in \eqref{s1} that we call $\mathcal{S}^{(1)}$ and $\mathcal{S}^{(2)}$.  By the Hecke relations we have
 \begin{displaymath}
\begin{split}
  \mathcal{S}^{(1)} = \frac{2\pi^2}{15\zeta(2)}  \cdot \frac{12}{k}& \sum_{f \in B_{k+1}} \frac{1}{L(1, \mathrm{\sym}^2 f)}    \sum_{\substack{d_1, d_2, a, m_1, m_2, r, s\\ (ar, s) = 1}} \sum_{h \mid (m_1^2, r^2)}  \frac{\mu(ar ) \mu(s)^2\mu(a) \lambda_f(m_1^2r^2/h^2) \lambda_f(d_1^2) }{r^{2} s^3a^3 m_1m_2^2d_1d_2^2}\\
  & \times W(rm_1^2m_2^4a^3)\exp\left(-\frac{d_1d_2^2}{k^{\delta_1}}\right). 
\end{split}  
\end{displaymath}
 We are now in a position to apply the Petersson formula a second time. The diagonal term equals
\begin{displaymath}
  \mathcal{S}^{(11)}  = \frac{2\pi^2  }{15 \zeta(2)^2} \sum_{\substack{d_2, a, m_1, m_2, r, s\\ (ra, s) = 1}} \sum_{h \mid (m_1^2, r^2)} \frac{\mu(a) \mu(s^2) \mu(ra) h}{r^3m_1^{2}a^3m_2^2s^3d_2^2} W(rm_1^2a^3m_2^4) \exp\left(-\frac{m_1rd_2^2/h}{k^{\delta_1}}\right).
\end{displaymath}
 By Mellin inversion and a straightforward computation with Euler products we obtain
\begin{displaymath}
   \mathcal{S}^{(11)} =  \frac{2\pi^2}{15 \zeta(2)^2}    \int_{(1)}\int_{(1)} L(u, v)\widetilde{W}(u) \Gamma(v) k^{\delta_1 v} \frac{du}{2\pi i} \frac{dv}{2\pi i}
\end{displaymath}
where
\begin{displaymath}
  L(u, v) :=  
\zeta(2 + 4u) \zeta(2 + 2u+v)\zeta(2+2v) 
\prod_p\left(1+\frac{1}{p^3} - \frac{1}{p^{3+u+v}} - \frac{1}{p^{4 + 3u+v}}\right).
\end{displaymath}
We shift   the contours to $\Re u =\Re v = -1/5$, pick up the poles of $\widetilde{W}$ and $\Gamma$ at $u = 0$ and $v=0$ and obtain
\begin{equation}\label{s11}
  \mathcal{S}^{(11)}  = \frac{2\pi^2 \zeta(2)}{15 \zeta(2)^2} \frac{\zeta(2)^2}{\zeta(4)} + O(k^{-2/5} + k^{-\delta_1/5}) = 2 + O(k^{-2/5} + k^{-\delta_1/5}). 
\end{equation}
The off-diagonal contribution equals
\begin{displaymath}
\begin{split}
  \mathcal{S}^{(12)} = 2\pi i^{-k}  \frac{2\pi^2}{15\zeta(2)^2}  &     \sum_{\substack{d_1, d_2, a, m_1, m_2, r, s\\ (ar, s) = 1}} \sum_{h \mid (m_1^2, r^2)} \sum_{c}  \frac{\mu(ar ) \mu(s)^2\mu(a)  }{c r^{2} s^3a^3 m_1m_2^2d_1d_2^2} S\left(\frac{m_1^2r^2}{h^2}, d_1^2, c\right)\\
  & \times W(rm_1^2m_2^4a^3)\exp\left(-\frac{d_1d_2^2}{k^{\delta_1}}\right) J_{k}\left(\frac{m_1rd_1}{hc}\right).
\end{split}  
\end{displaymath}
By the rapid decay of the Bessel function near 0 we can truncate the $c$-sum at $c \leq 100 \frac{m_1rd_1}{hk}$. We use  the trivial bounds 
\begin{equation}\label{trivialbound}
  |S(\ast, \ast, c)| \leq c, \quad  J_{k}(x) \ll k^{-1/3}
\end{equation}  
   to see that 
\begin{equation}\label{s12}
  \mathcal{S}^{(12)} \ll k^{-1/3+\delta_1 + \varepsilon}. 
\end{equation}

Next we turn to the estimation of $\mathcal{S}^{(2)}$. Let $0 < \delta_4 < 1/10$. By \eqref{est} and Theorem \ref{prop31} we can truncate the $r$-sum at $r \leq k^{\delta_4}$ at the cost of an error $O(k^{-\delta_4/2 +\varepsilon})$.  Hence we are left with bounding
\begin{displaymath}
\mathcal{S}^{(2)}(N, C) := \frac{12}{k} \sum_{f \in B_{k+1}}  \frac{1}{L(1, \mathrm{sym}^2f)}\sum_{d_1,d_2} \frac{\lambda_f(d_1^2)}{d_1d_2^2} \exp\left(-\frac{d_1d_2^2}{k^{\delta_1}}\right)  \sum_{\substack{r \leq k^{\delta_4}\\(r, s) = 1}} \frac{\mu(r ) \mu(s)^2}{r^{3/2} s^3}  \mathcal{M}^{(2)}_f( r, N, C)
\end{displaymath}
with $\mathcal{M}^{(2)}_f(r, N, C)$ as in \eqref{aftervor} and $N, C$ as in \eqref{ranges}. 
We insert Lemmas \ref{lem32} (in the form of \eqref{simpler}) and \ref{lem33} and conclude
\begin{displaymath}
\mathcal{S}^{(2)}(N, C) \ll \sum_{d_1 \leq k^{\delta_1+\varepsilon}} \sum_{r \leq k^{\delta_4}}\sum_m \sum_{C \leq c \leq 2C} \mathcal{T}(d_1, r, m, c, N) + O(k^{-100})
\end{displaymath}
where
\begin{displaymath}
\begin{split}
\mathcal{T}(d_1, & r, m, c, N) = \underset{\substack{ n_2 n_1^2 \leq k^{\varepsilon} N^{1/2} r^{3/2} m\\     n_1 \mid c}}{\sum } \frac{n_1 \tau(c ) (c, m)}{d_1 r^{7/4}m^2 N^{1/2}}   \Bigl | \frac{12}{k} \sum_{f \in B_{k+1}} \frac{\lambda_f(d_1^2) A(n_2, n_1)}{L(1, \mathrm{sym}^2 f)}\Bigr|\\
 &\ll k^{\varepsilon} \sum_{\substack{a, l_1, l_2, n_1, n_2\\ a^3l_1^2l_2n_1^2n_2 \leq k^{\varepsilon} N^{1/2} r^{3/2} m\\ al_1n_1 \mid c}}  \frac{al_1n_1 \tau(c ) (c, m)}{d_1 r^{7/4}m^2 N^{1/2}}   \sum_{h \mid (n_1^2, n_2^2)} \Bigl | \frac{12}{k} \sum_{f \in B_{k+1}} \frac{\lambda_f(d_1^2) \lambda_f(n_1^2n_2^2/h^2)}{L(1, \mathrm{sym}^2 f)}\Bigr|. 
 \end{split}
\end{displaymath} 
One last time we apply the Petersson formula. For the off-diagonal term we apply as before only the trivial bounds \eqref{trivialbound} and truncate the series appropriately by the rapid decay of the Bessel function near 0. Hence 
\begin{displaymath}
  \mathcal{T}(d_1,  r, m, c, N)  \ll \sum_{\substack{a, l_1, l_2, n_1, n_2\\ a^3l_1^2l_2n_1^2n_2 \leq k^{\varepsilon} N^{1/2} r^{3/2} m\\ al_1n_1 \mid c}}  \frac{al_1n_1 \tau(c) (c, m)}{d_1 r^{7/4}m^2 N^{1/2}}   \sum_{h \mid (n_1^2, n_2^2)} \left(\delta_{d_1h = n_1n_2} + O\left(\frac{ d_1n_1n_2}{hk^{4/3}}\right)\right). 
\end{displaymath}
Now it's just a matter of book-keeping, but we can simplify our task by noticing that \eqref{ranges} and \eqref{ranges1} imply that $m$ and $c$ and hence $a, l_1, n_1$ are $O(k^{\delta_4/2 +\varepsilon})$, and $h = O(k^{\delta_4+\varepsilon})$. Hence 
\begin{displaymath}
 \mathcal{S}^{(2)}(N, C) \ll k^{100(\delta_4+\delta_1) - 1} \Bigl(1 + \sum_{l_2n_2 \leq k^{1 + 100\delta_4}} \frac{n_2}{k^{4/3}}\Bigr) \ll k^{-1/3 + O(\delta_4 + \delta_1)}.
\end{displaymath}
Combining this with \eqref{s1}, \eqref{s11} and \eqref{s12} and choosing $\delta_1, \delta_2, \delta_4$ sufficiently small, the proof is complete.\\


\section{A geodesic restriction problem}\label{sec6}

In this section we prove Theorem \ref{cor2}.  For convenience of the reader, we first indicate a proof of \eqref{period}. By \eqref{normholo}, an $L^2$-normalized cuspidal Hecke eigenform has the Fourier expansion
\begin{equation}\label{fourfirst}
  f(z) =  a_f(1) \sum_{n=1}^{\infty} \lambda_f(n) (4\pi n)^{(k-1)/2} e(nz), \quad |a_f(1)|^2 = \frac{2\pi^2}{L(1, \mathrm{sym}^2 f) \Gamma(k)}. 
\end{equation}
We compute the Mellin transform of $f(iy)y^{k/2}$:
\begin{displaymath}
  \int_0^{\infty} f(iy) y^{k/2} y^s \frac{dy}{y} =  a_f(1) \frac{2^{k/2}}{\sqrt{4\pi}} L(1/2 + s, f) \frac{\Gamma(s + \frac{k}{2}) }{(2\pi)^s}.  
\end{displaymath}
By Parseval we obtain
\begin{displaymath}
  \mathcal{I} = \frac{1}{2\pi} |a_f(1)|^2 \int_{-\infty}^{\infty} \frac{2^k}{4\pi} |L(1/2 + it, f)|^2  |\Gamma(it + \textstyle\frac{k}{2})|^2 dt,
\end{displaymath}
 and \eqref{period} follows.\\
 
We proceed to prove Theorem \ref{cor2}. We can spectrally decompose $f^2$ into cusp forms of weight $2k$ getting  
\begin{equation}\label{decomp}
  \mathcal{I} = \sum_{g \in B_{2k}} \int_{0}^{\infty} \langle F^2, G\rangle g(iy) y^k \frac{dy}{y} = \sum_{g \in B_{2k}}   \langle F^2, G\rangle  a_g(1) \frac{2^k}{\sqrt{4\pi}} L(1/2, g) \Gamma(k)
\end{equation} 
where 
\begin{displaymath}
  |a_g(1)|^2 =  \frac{2\pi^2}{L(1, \mathrm{sym}^2 g) \Gamma(2k)}
\end{displaymath}
   is defined as in \eqref{fourfirst} and $G(z) = g(z) y^k$.    We insert \eqref{watson} with $f=h$ and use Cauchy-Schwarz together with the bound
\begin{displaymath}
  \frac{2^k \Gamma(k)}{\Gamma(2k)^{1/2}} \ll k^{-1/4}
\end{displaymath}
to  conclude (again by positivity) 
\begin{displaymath}
 \mathcal{I} \ll k^{-3/4 + \varepsilon} \Bigl(\sum_{g \in B_{2k}} L(1/2, \mathrm{sym}^2 f \times g)\Bigr)^{1/2} \Bigl(\sum_{g \in B_{2k}} L(1/2,   g)^3\Bigr)^{1/2}.  
\end{displaymath}
For both factors on the right-hand side we have best possible bounds; the former is given in Theorem \ref{prop31}, the latter in \cite[Theorem 3.1.1, p.\ 36]{Pe}. \\
 
\textbf{Remark:} 
We also observe 
 that \eqref{decomp} indicates 
\begin{displaymath}
   k^{-1/2} \sum_{g \in B_{2k}} \frac{\langle F^2, G \rangle L(1/2, g)}{L(1, \mathrm{sym}^2 g)} = k^{o(1)},
\end{displaymath}
where each term in the sum is (on Lindel\"of) of order $k^{-1/4 + o(1)}$. Hence there is some cancellation in this sum, but not square-root cancellation; in other words, the real number  $\langle F^2, G\rangle$ seems to have a slight tendency to be positive. In this context we remark that in the case of Maa{\ss} forms, Bir\'o \cite{Bi} has given an interesting formula for the triple product itself (not the square of its absolute value) in terms of a triple product over $1/2$-integral weight forms. 

\section{A general stationary phase lemma with smooth weights}

The main result of this section evaluates asymptotically fairly arbitrary smooth oscillating integrals.  As mentioned in the introduction, this result is more general than needed for the immediate purposes of the present paper.

We begin with a preparatory lemma which records conditions under which repeated integration by parts shows that an oscillatory integral is very small. This is similar in spirit to \cite[Lemma 6]{JM}. 
\begin{lemma} \label{integrationbyparts}
 Let $Y \geq 1$, $X, Q, U, R > 0$, 
and suppose that $w$ 
 is a smooth function with support on $[\alpha, \beta]$, satisfying
\begin{equation*}
w^{(j)}(t) \ll_j X U^{-j}.
\end{equation*}
Suppose $h$ 
  is a smooth function on $[\alpha, \beta]$ such that
\begin{equation}
 |h'(t)| \geq R
\end{equation}
for some $R > 0$, and
\begin{equation}\label{diffh0}
h^{(j)}(t) \ll_j Y Q^{-j}, \qquad \text{for } j=2, 3, \dots.
\end{equation}
Then the integral $I$ defined by
\begin{equation*}
I = \intR w(t) e^{i h(t)} dt
\end{equation*}
satisfies
\begin{equation}
\label{eq:Ipartsbound}
 I \ll_A (\beta - \alpha) X [(QR/\sqrt{Y})^{-A} + (RU)^{-A}].
\end{equation}
\end{lemma}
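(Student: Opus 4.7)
The plan is to integrate by parts $A$ times using the non-stationary phase operator $D(f) := \frac{d}{dt}\bigl(f/(ih')\bigr)$, which is well-defined on $[\alpha,\beta]$ thanks to the hypothesis $|h'| \geq R > 0$. Because $w$ has compact support inside $[\alpha,\beta]$, all boundary terms vanish at every stage, and a routine induction yields
\begin{equation*}
  I \;=\; (-1)^A \int_{\alpha}^{\beta} (D^A w)(t)\, e^{ih(t)}\, dt, \qquad\text{so that}\qquad |I| \;\leq\; (\beta-\alpha)\sup_{t \in [\alpha,\beta]} |D^A w(t)|.
\end{equation*}
The entire task is thereby reduced to a pointwise bound on $D^A w$.

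I would then prove by induction on $A$ that $D^A w$ is a finite linear combination, with combinatorial coefficients depending only on $A$, of terms of the shape
\begin{equation*}
  \frac{w^{(a_0)} \prod_{j \geq 2}\bigl(h^{(j)}\bigr)^{a_j}}{(h')^{A+K}}, \qquad K := \sum_{j\geq 2} a_j,
\end{equation*}
subject to the budget constraint $a_0 + \sum_{j \geq 2}(j-1)\,a_j = A$. Indeed, each application of $D$ spends exactly one derivative: it either lands on $w$, on an existing $h^{(j)}$ (producing $h^{(j+1)}$), or on an inverse power of $h'$ (producing a new $h''$), and in all three cases the quantity $a_0 + \sum (j-1) a_j$ grows by one. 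From $j \geq 2$ the constraint forces $K \leq A - a_0$.

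Invoking $|w^{(a_0)}| \ll X U^{-a_0}$, $|h^{(j)}| \ll_j Y Q^{-j}$, and $|h'| \geq R$, each such term is majorized by
\begin{equation*}
  X\, U^{-a_0} Y^K Q^{-(A+K-a_0)} R^{-(A+K)} \;=\; X\,(UR)^{-a_0}\,(QR)^{-(A-a_0)}\,\bigl(Y/(QR)\bigr)^K.
\end{equation*}
A short case analysis on whether $Y/(QR) \leq 1$ or $>1$, using $Y \geq 1$ in the first case and $K \leq A-a_0$ in the second, shows that in the useful regime $QR/\sqrt{Y} \geq 1$ this is $\ll_A X\,(UR)^{-a_0}\,(QR/\sqrt{Y})^{-(A-a_0)}$. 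Summing over the finitely many admissible index tuples and over $a_0 \in \{0,\ldots,A\}$, and then applying the elementary inequality $x^{a_0} y^{A-a_0} \leq x^A + y^A$ for $x,y \geq 0$, gives
\begin{equation*}
  |D^A w(t)| \;\ll_A\; X\bigl[(UR)^{-A} + (QR/\sqrt{Y})^{-A}\bigr],
\end{equation*}
which together with the reduction above is precisely \eqref{eq:Ipartsbound}.

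The only step requiring genuine care is the combinatorial induction producing the explicit form of $D^A w$ together with the budget constraint, and in particular verifying that the mixed factor $\bigl(Y/(QR)\bigr)^K$ does not escape the target envelope — this is exactly where the hypotheses $Y \geq 1$ and $K \leq A - a_0$ are both essential. In the complementary regime where $QR/\sqrt{Y} < 1$ or $UR < 1$, the claimed bound already exceeds the trivial estimate $|I| \leq (\beta-\alpha)X$, so the lemma is vacuously true and no argument is required. The overall structure is the classical non-stationary phase principle in the style of \cite{JM}.
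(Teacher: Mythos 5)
Your proof is correct and follows essentially the same route as the paper: the same non-stationary phase operator $\mathcal{D}(f) = \pm\frac{d}{dt}(f/(ih'))$, the same inductive structure formula for $\mathcal{D}^A w$ as a sum of terms $w^{(a_0)}\prod_j (h^{(j)})^{a_j}/(h')^{A+K}$ with the budget constraint $a_0 + \sum_{j\geq 2}(j-1)a_j = A$ (the paper's parameters $\nu, \mu, \gamma_j$ correspond to your $A+K, a_0, a_j$), and the same final bound via $Y\geq 1$ and the elementary inequality $x^{a_0}y^{A-a_0}\leq x^A+y^A$.
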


This should be interpreted as follows: the integral $I$ is negligible if $RU$ and $QRY^{-1/2}$ are both significantly bigger than 1. The variables $X, Y$ measure the size of $w$ and $h$, the variables $U, Q$ the ``flatness" of $w$ and $h$. In practice, $R$, $Y$ and $Q$ are often not  independent. A typical case is that \eqref{diffh0} holds for $j=1$ as well, and one has $Y/Q \asymp R$. Then $RU$ is big, if roughly speaking $e^{i h(t)}$ oscillates more than $w$, and $QRY^{-1/2} \asymp Y^{1/2}$ is also big as long as $e^{i h(t)}$ has some oscillation. These are natural conditions away from the stationary point.  A nice feature of Lemma \ref{integrationbyparts} is that it can quickly show that $I$ is extremely small even if $QR/\sqrt{Y}$ and $RU$ are tending to infinity rather slowly.

\begin{proof}
 Define the differential operator
\begin{displaymath}
\mathcal{D}(f)(t) :=-\frac{d}{dx} \left(\frac{f}{ih'}\right)(t)
\end{displaymath}
for a smooth function $f$ with compact support, so that
\begin{equation}\label{applyD}
  \int_{-\infty}^{\infty} f(t) e^{i h(t)} dt =  \int_{-\infty}^{\infty} \mathcal{D}^n(f)(t) e^{i h(t)} dt
\end{equation}
for any $n \in \Bbb{N}_0$.   It is easy to see by induction that 
\begin{equation}\label{diffD}
  \mathcal{D}^{n}(f)(t) = \sum_{\nu = n}^{2n}  \sum_{\mu = 0}^{\nu}  \frac{f^{(\mu)}(t)}{h'(t)^\nu}   \sum_{2\gamma_2 + \ldots + \nu\gamma_\nu = \nu - \mu} c_{\nu, \mu, \gamma_2, \ldots, \gamma_{\nu}}  h^{(2)}(t)^{\gamma_2} \cdots h^{(\nu)}(t)^{\gamma_{\nu}}
 \end{equation}
for certain absolute coefficients $c_{\nu, \mu, \gamma_2, \ldots, \gamma_{\nu}} \in \Bbb{C}$ and any $n \in \Bbb{N}_0$.   Then
\begin{equation}
 |I| \leq (\beta - \alpha) \| \mathcal{D}^n(w) \|_{\infty} \ll (\beta - \alpha) X \sum_{\nu = n}^{2n} R^{-\nu}  \sum_{\mu = 0}^{\nu} U^{-\mu} \frac{Y^{\frac{\nu - \mu}{2}}}{Q^{\nu - \mu}},
\end{equation}
which quickly leads to \eqref{eq:Ipartsbound}.
\end{proof}

\begin{proposition}\label{statphase}
Let $0 < \delta < 1/10$, $X, Y, V, V_1, Q > 0$, $Z := Q + X + Y + V_1+1$,  and assume that
\begin{equation}\label{importantconditions}
Y \geq Z^{3 \delta}, \quad V_1 \geq V \geq \frac{QZ^{ \frac{\delta}{2}} }{Y^{1/2}}.\end{equation} Suppose that $w$ is a smooth function on $\Bbb{R}$ with support on an  interval $J$ of length $V_1$, satisfying
\begin{equation*}
w^{(j)}(t) \ll_j X V^{-j}
\end{equation*}
for all $j \in \Bbb{N}_0$. Suppose $h$ is a smooth function on $J$ such that there exists a unique point $t_0 \in J$ such that $h'(t_0) = 0$, and furthermore
\begin{equation}\label{diffh}
h''(t) \gg Y Q^{-2}, \quad h^{(j)}(t) \ll_j Y Q^{-j}, \qquad \text{for } j=1,2, 3, \dots, t \in J. 
\end{equation}
Then the integral $I  $ defined by
\begin{equation*}
I = \intR w(t) e^{i h(t)} dt
\end{equation*}
has an asymptotic expansion of the form
\begin{equation}
\label{eq:statphase}
I  = \frac{ e^{ih(t_0)}}{\sqrt{h''(t_0)}} \sum_{n \leq 3 \delta^{-1} A} p_n(t_0)  + O_{A,\delta}(Z^{-A}), \quad p_n(t_0) = \frac{\sqrt{2\pi} e^{\pi i/4}}{n!} \Big(\frac{i}{2 h''(t_0)}\Big)^n  G^{(2n)}(t_0),
\end{equation}
where $A > 0$ is arbitrary, and
\begin{equation}\label{defG}
G(t) = w(t) e^{i H(t)}, \qquad H(t) = h(t) - h(t_0) - \half h''(t_0) (t-t_0)^2.
\end{equation}
Furthermore, each $p_n$ is a rational function in $h'', h''', \dots$, satisfying
\begin{equation}
\label{eq:pnderiv}
\frac{d^j}{d t_0^j} p_n(t_0) \ll_{j,n} X(V^{-j} + Q^{-j}) \big((V^2 Y/Q^2)^{-n} + Y^{-n/3}\big).
\end{equation}
\end{proposition}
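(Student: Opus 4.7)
The strategy is a localized Fourier/Plancherel argument around the stationary point, combined with integration by parts away from it. Set $W := QZ^{\delta/2}/Y^{1/2}$, the scale at which the quadratic part of the phase becomes $\asymp Z^{\delta}$; by \eqref{importantconditions}, $W \leq V$. Introduce a smooth cutoff $\chi$ equal to $1$ on $[t_0-W, t_0+W]$, supported in $[t_0-2W, t_0+2W]$, with $\chi^{(j)} \ll_j W^{-j}$, and split $I = I_1 + I_2$ with $I_1$ and $I_2$ the contributions from $\chi$ and $1-\chi$, respectively. Since $V \geq W$, the amplitude $(1-\chi)w$ has derivatives of size $\ll_j X W^{-j}$, while \eqref{diffh} and $h'(t_0)=0$ yield $|h'(t)| \gg (Y/Q^2)|t-t_0| \geq Y^{1/2}Z^{\delta/2}/Q$ on its support. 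Lemma \ref{integrationbyparts} applied with $U = W$ and $R \asymp Y^{1/2} Z^{\delta/2}/Q$, so that $QR/\sqrt{Y} \asymp Z^{\delta/2}$ and $RU \asymp Z^{\delta}$, then yields $I_2 \ll_{A,\delta} Z^{-A}$ once the integration-by-parts exponent is chosen sufficiently large (all other factors being bounded by a power of $Z$).

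For $I_1$, assume $h''(t_0) > 0$ (the negative case is symmetric by conjugation), set $\lambda := h''(t_0) \asymp Y/Q^2$, and use \eqref{defG} to write
\[
 I_1 = e^{i h(t_0)} \int \chi(t) G(t)\, e^{i \lambda (t-t_0)^2/2}\, dt, \qquad G(t) = w(t) e^{i H(t)}.
\]
Substituting $s = t - t_0$, letting $\widetilde{G}(s) := \chi(s+t_0) G(s+t_0)$, and combining the Fresnel identity $\int e^{i\lambda s^2/2 - i\xi s}\, ds = \sqrt{2\pi/\lambda}\, e^{i \pi/4}\, e^{-i \xi^2/(2\lambda)}$ with Plancherel's theorem gives
\[
 I_1 = \frac{e^{i h(t_0) + i \pi/4}}{\sqrt{2\pi \lambda}} \int_{-\infty}^{\infty} \widehat{\widetilde{G}}(\xi)\, e^{-i \xi^2/(2\lambda)}\, d\xi.
\]
Taylor-expanding $e^{-i \xi^2/(2\lambda)} = \sum_{n < N} \frac{1}{n!}(-i \xi^2/(2\lambda))^n + R_N(\xi)$ with $|R_N(\xi)| \leq (\xi^2/(2\lambda))^N / N!$, and using $\widetilde{G}^{(2n)}(0) = G^{(2n)}(t_0)$ (since $\chi \equiv 1$ near $t_0$) together with $\frac{1}{2\pi}\int \widehat{\widetilde{G}}(\xi) \xi^{2n}\, d\xi = (-1)^n \widetilde{G}^{(2n)}(0)$, reproduces the main terms in \eqref{eq:statphase}.

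The truncation error reduces to controlling $\mathcal{E}_N := \int |\widehat{\widetilde{G}}(\xi)|\, |\xi|^{2N}\, d\xi$. The critical scale observation is that on the support of $\chi$, the vanishing $H(t_0) = H'(t_0) = H''(t_0) = 0$ together with $|h^{(j)}| \ll Y Q^{-j}$ and the hypothesis $Y \geq Z^{3\delta}$ imply $|H^{(j)}(t)| \ll_j W^{-j}$ for every $j \geq 1$; Fa\`a di Bruno propagates this to $|(e^{i H})^{(k)}| \ll_k W^{-k}$, and consequently $|\widetilde{G}^{(k)}| \ll_k X W^{-k}$ (using $V \geq W$). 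Combining Plancherel with Cauchy--Schwarz via $\|\widetilde{G}^{(k)}\|_2 \ll X W^{1/2 - k}$ gives $\mathcal{E}_N \ll X Z^{O(1)} W^{-2N}$. Since $\lambda W^2 \asymp Z^{\delta}$, the truncation error is $\ll X Z^{O(1)}(2 \lambda W^2)^{-N}/N! \ll Z^{O(1)} Z^{-\delta N}$, which is $\ll Z^{-A}$ upon choosing $N = \lceil 3 \delta^{-1} A \rceil$.

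For the derivative bound \eqref{eq:pnderiv}, note that $G^{(2n)}(t_0)$ expands via Leibniz and Fa\`a di Bruno into a sum of monomials $w^{(\mu)}(t_0) \prod_{\ell} h^{(\nu_\ell)}(t_0)$ with $\mu + \sum_\ell \nu_\ell = 2n$ and $\nu_\ell \geq 3$ (because $H, H', H''$ vanish at $t_0$). Inserting $|w^{(\mu)}| \ll X V^{-\mu}$, $|h^{(\nu)}| \ll Y Q^{-\nu}$, and the factor $|\lambda|^{-n} \asymp Q^{2n}/Y^n$, a short optimization shows that the two extremal contributions come from $\mu = 2n$ with no $h$-factor (giving $X(V^2 Y/Q^2)^{-n}$) and from all $\nu_\ell = 3$ with $\mu = 0$ (giving $X Y^{-n/3}$), matching \eqref{eq:pnderiv} at $j=0$. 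Each additional $\partial_{t_0}$ hits either a $w$-factor (producing $V^{-1}$), an $h$-factor (producing $Q^{-1}$), or the $h''(t_0)^{-n}$ prefactor (also producing $Q^{-1}$ via $h'''/h''$), yielding the prefactor $X(V^{-j} + Q^{-j})$ after Leibniz bookkeeping. The main obstacle is exactly the propagation of the scale bound $|H^{(j)}| \ll W^{-j}$ through Fa\`a di Bruno into $|\widetilde{G}^{(k)}| \ll X W^{-k}$; without this, the geometric ratio $(\lambda W^2)^{-1} = Z^{-\delta}$ driving the convergence of the Taylor remainder would not be visible and the whole approach would collapse.
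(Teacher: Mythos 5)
Your proposal is correct and follows essentially the same route as the paper: split the integral at a scale $\asymp Q Z^{\delta/2}/Y^{1/2}$ around the stationary point, dispose of the tail via Lemma \ref{integrationbyparts}, and on the core use a Fourier/Fresnel computation followed by a Taylor expansion of the residual Gaussian factor. The paper parametrizes the cutoff scale by a general $U$ with $Y U^2/Q^2 \geq Z^\delta$ and $Y U^3/Q^3 \leq 1$ (you fix $W$ at the lower endpoint of this range, which is fine given $Y \geq Z^{3\delta}$), and it truncates the Fourier integral at $|y| \leq U^{-1}Z^{\delta/4}$ before Taylor-expanding rather than bounding the global Taylor remainder directly as you do via Cauchy--Schwarz on the moments of $|\widehat{\widetilde{G}}|$; these are equivalent bookkeeping choices. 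Your treatment of \eqref{eq:pnderiv} matches the paper's.
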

 
 \bigskip
 
  The leading term $$\sqrt{2\pi} e^{\frac{\pi i}{4}}  \frac{ e^{ih(t_0)}}{\sqrt{h''(t_0)}} w(t_0) \ll \frac{QX}{Y^{1/2}}$$ in this asymptotic expansion is well-known and can be found in many sources but it can be difficult to find the full expansion in the literature.  It is desirable to have such an expansion even for a (slightly) oscillating weight function $w$ (cf.\ the end of the proof of Lemma \ref{lem32} for an example)  in which case $V$ is a bit smaller than $V_1$. Flexibility of the parameters $V$ and $V_1$ is also useful in situations  where one has several stationary points moving towards each other (in which case one splits the range of integration into sufficiently small subintervals). 
  
 The conditions \eqref{importantconditions} and the bound \eqref{eq:pnderiv} imply automatically that each term in the asymptotic expansion \eqref{eq:statphase} is smaller than the preceding term. Observe that the second condition in \eqref{importantconditions}    cannot be relaxed much because if $V_1 \ll Q^{1-\varepsilon}/\sqrt{Y}$ then the trivial bound is smaller than the main term in \eqref{eq:statphase}.  
  
   \begin{corollary}
\label{cor:shortintegral}
 Assume the conditions of Proposition \ref{statphase}.  There exists a function $w_0(t)$ supported on the interval $[-1, 1]$ such that with any  $T \asymp Z^{\varepsilon} (h''(t_0))^{-1/2}$, we have
\begin{equation}
\label{eq:shortintegral}
 \intR w(t) e^{ih(t)} dt = \intR w(t) w_0\Bigl(\frac{t-t_0}{T}\Bigr) e^{ih(t)} dt + O_{A,\varepsilon}(Z^{-A}).
\end{equation}
\end{corollary}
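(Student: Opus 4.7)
I would let $w_0$ be any fixed smooth function with $w_0\equiv 1$ on $[-1/2,1/2]$ and $\mathrm{supp}\, w_0\subseteq[-1,1]$; the task is then to bound
\[
 E := \intR w(t)\bigl(1 - w_0((t-t_0)/T)\bigr) e^{ih(t)}\, dt,
\]
whose integrand is supported on $\{T/2\le|t-t_0|\le V_1\}\cap J$ (and $E=0$ trivially if this set is empty, which covers the degenerate case $T\gtrsim V_1$). Note that $h''(t_0)\asymp Y/Q^{2}$ by the two-sided estimate \eqref{diffh}, so $T\asymp Z^{\varepsilon}Q/\sqrt{Y}$, and since $h'(t_0)=0$, the mean value theorem yields $|h'(t)|\gg |t-t_0|\cdot Y/Q^{2}$ on the relevant region.

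The plan is a smooth dyadic decomposition followed by Lemma \ref{integrationbyparts}. Let $\eta_k$ form a dyadic partition of unity with $\mathrm{supp}\,\eta_k\subseteq\{U_k\le|t-t_0|\le 2U_k\}$, $U_k := 2^{k-1}T$, and $\eta_k^{(j)}\ll_j U_k^{-j}$; only $O(\log Z)$ indices $k$ contribute since $U_k\le V_1\le Z$. Writing $E=\sum_k E_k$, I would apply Lemma \ref{integrationbyparts} to each $E_k$ with parameters $X'=X$, $U'=\min(V,U_k)$, $R'\asymp U_kY/Q^{2}$, $Y'=Y$, $Q'=Q$, and interval length $O(U_k)$; the required bounds on the weight $w(t)(1-w_0((t-t_0)/T))\eta_k(t)$ follow by the product rule.

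The two hypotheses of the lemma are then comfortably satisfied:
\[
\frac{QR'}{\sqrt{Y}} = \frac{U_k\sqrt{Y}}{Q}\ge \frac{T\sqrt{Y}}{Q}\asymp Z^{\varepsilon},
\]
while a case split gives $R'U'\ge Z^{\min(2\varepsilon,\,\varepsilon+\delta/2)}$: if $U_k\le V$ then $R'U'=U_k^{2}Y/Q^{2}\ge T^{2}Y/Q^{2}\asymp Z^{2\varepsilon}$, and if $U_k>V$ then $R'U'=VU_kY/Q^{2}\ge VTY/Q^{2}=Z^{\varepsilon}V\sqrt{Y}/Q\ge Z^{\varepsilon+\delta/2}$ by the hypothesis $V\ge QZ^{\delta/2}/\sqrt{Y}$ of \eqref{importantconditions}. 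Lemma \ref{integrationbyparts} therefore gives $E_k\ll_{A'} U_k X Z^{-\varepsilon A'}$, and summing over the $O(\log Z)$ nonzero dyadic pieces with $U_k,X\le Z$ produces $E\ll_{A',\varepsilon,\delta} Z^{3-\varepsilon A'}$, which is $\le Z^{-A}$ once $A'$ is chosen large in terms of $A,\varepsilon,\delta$.

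The only minor obstacle is the case split for $R'U'$, reflecting whether the dyadic scale $U_k$ of $\eta_k$ or the intrinsic scale $V$ of $w$ is smaller; the lower bound $V\ge QZ^{\delta/2}/\sqrt{Y}$ is exactly what makes the regime $U_k>V$ tractable, so no essential analytic difficulty arises.
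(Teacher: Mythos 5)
Your proof is correct and rests on the same two ideas as the paper's: excise the window $|t-t_0|\lesssim T$ (which contributes nothing new to prove), and bound the remaining tail by repeated integration by parts via Lemma~\ref{integrationbyparts}, using the mean-value bound $|h'(t)|\gg |t-t_0|\,Y/Q^2$ that comes from $h'(t_0)=0$ and $h''\gg Y/Q^2$. The paper does this with a single application of Lemma~\ref{integrationbyparts} to the whole tail $I_0=\int w(t)\bigl(1-w_0((t-t_0)/U)\bigr)e^{ih(t)}dt$, taking the worst-case lower bound $R\asymp TY/Q^2$, flatness $U'=T$, and interval length $V_1$, which already yields $I_0\ll V_1 X\, Z^{-\varepsilon A'}$; there is no need to localize dyadically. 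Your dyadic decomposition is a harmless refinement: it buys a scale-dependent $R'\asymp U_k Y/Q^2$ (so each hypothesis of Lemma~\ref{integrationbyparts} is checked with slightly more room) and, more usefully, it makes completely explicit how to handle the regime $U_k>V$ by replacing the flatness parameter with $\min(V,U_k)$; the paper's one-shot argument tacitly assumes $T\le V$ (which holds in the interesting range $\varepsilon\le\delta/2$, the remaining cases being trivial or reducible to it). One small notational slip: with $U_k=2^{k-1}T$ and $\operatorname{supp}\eta_k\subseteq\{U_k\le|t-t_0|\le 2U_k\}$, the shell $T/2\le|t-t_0|<T$ is not covered; the indexing should start from $U_0\asymp T/2$, or the $\eta_k$ should be taken with overlapping supports $[U_k/2,2U_k]$, which is the standard convention and repairs this without affecting anything else.
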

We will derive Corollary \ref{cor:shortintegral} in the course of the proof of Proposition \ref{statphase}.  The nice feature here is that the trivial bound applied to the right hand side of \eqref{eq:shortintegral} is only slightly worse than the main term in Proposition \ref{statphase}, but the form of the expression may be easier to handle for further manipulations.  For example, one may wish to study a multi-dimensional oscillatory integral by focusing on one variable at a time.  If one applies stationary phase in terms of one of the variables, then the stationary point $t_0$ may then depend implicitly on the other variables; this may make the further analysis more challenging.  The right hand side of \eqref{eq:shortintegral} has the pleasant feature that $t_0$ only appears in the argument of $w_0$ and not in $h$, whereas it occurs in both the phase of $h$ and in the weight function in \eqref{eq:statphase}.

\begin{proof} 
Let $U \leq V$ be a parameter satisfying $$\frac{Y U^2}{Q^2} \geq Z^{\delta}, \quad \frac{YU^3}{Q^3} \leq 1. $$   This is possible for $0 < \delta \leq 1/10$ by  \eqref{importantconditions}.  
Fix a smooth, compactly-supported function $w_0$ satisfying $w_0(x) = 1$ for $|x| < 1/2$, and consider
\begin{equation*}
I_0 = \intR w(t) \Bigl(1-w_0\Bigl(\frac{t-t_0}{U}\Bigr)\Bigr) e^{ih(t)} dt. 
\end{equation*}
 Notice that with $f(t) = w(t) \Bigl(1-w_0\Bigl(\frac{t-t_0}{U}\Bigr)\Bigr)$, one has
\begin{equation}\label{diffhf}
 f^{(j)} \ll_j X U^{-j} \,\,\, (j = 1, 2, \ldots), \qquad  |h'(t)|  \gg  |t - t_0| \min_{|\xi - t_0| \leq t} |h''(\xi)| \gg  \frac{UY}{Q^2}  \,\,\, (t \in \text{supp}(f)).
\end{equation}
Then we apply Lemma \ref{integrationbyparts} with $\beta-\alpha = V_1$,  $R \asymp UY/Q^2$, to obtain
\begin{equation}
\label{eq:I0bound}
I_0 \ll_{A, \delta} Z^{-A},
\end{equation}
where $A > 0$ is arbitrarily large, since $U^2 Y/Q^2 \geq Z^{\delta}$.   
Hence 
\begin{equation*}
I =  \intR w(t) w_0\Bigl(\frac{t-t_0}{U}\Bigr) e^{ih(t)} dt + O_{A, \delta}(Z^{-A}) =: I_1 + O_{A, \delta}(Z^{-A}),
\end{equation*}
say.   By choosing $U \asymp Z^{\varepsilon} h''(t_0)^{-1/2}$, we obtain Corollary \ref{cor:shortintegral}.

Writing a Taylor expansion for $h(t)$ around $t_0$, we have
\begin{equation*}
h(t) = h(t_0) +  \frac{h''(t_0)(t-t_0)^2}{2!} + H(t),
\end{equation*}
where
\begin{equation*}
H(t) = \frac{h'''(t_0) (t-t_0)^3}{3!} + \dots.
\end{equation*}
 Notice that 
 \begin{equation*}
H' \ll \frac{U^2 Y}{Q^3}, \quad H'' \ll \frac{U Y}{Q^3}, \quad  H^{(j)} = h^{(j)} \ll Y Q^{-j}, \quad \text{for } j \geq 3.
\end{equation*}
By \eqref{importantconditions} this implies $H^{(j)} \ll U^{-j}$ for $j=1, 2, \dots$.
With this notation we recast $I_1$ as  
\begin{equation*}
I_1  = e^{ih(t_0)} \intR g(t) e^{ih''(t_0) (t-t_0)^2/2} dt, \qquad g(t) = w(t) w_0\Bigl(\frac{t-t_0}{U}\Bigr) e^{iH(t)}.
\end{equation*}
Observe that $g^{(j)} \ll X U^{-j}$.

This integral can be evaluated in a number of ways and its asymptotic expansion is easily found.  
One simple way is to write, for a small parameter $\varepsilon$ to be chosen in a moment,
\begin{equation*}
g(t) = \intR \widehat{g}(y) e(ty) dy = \int_{|y| \leq U^{-1}Z^{\varepsilon}} \widehat{g}(y) e(ty) dy + O_{\varepsilon, A}(Z^{-A}), 
\end{equation*}
reverse the orders of integration, complete the square, and evaluate the Gaussian integral.  It becomes
\begin{equation*}
I_1 = \frac{\sqrt{2\pi} e^{\pi i/4} e^{ih(t_0)}}{\sqrt{h''(t_0)}}   \int_{|y| \leq U^{-1}Z^{\varepsilon}} \widehat{g}(y) \exp\left(2 \pi i y t_0 - i \frac{2 \pi^2 y^2}{h''(t_0)}\right) dy + O_{\varepsilon, A}(Z^{-A}).
\end{equation*}
Next we note that $y^2/h''(t_0) \ll Y^{-1} Q^2 U^{-2}Z^{ 2\varepsilon} \leq Z^{2 \varepsilon-\delta}$.   Now we choose $\varepsilon   = \delta/4$, so that the preceding quantity is $O(Z^{-\delta/2})$.  Hence by another Taylor development we obtain  
\begin{equation*}
I_1 = \frac{\sqrt{2\pi} e^{\pi i/4} e^{ih(t_0)}}{\sqrt{h''(t_0)}} \Bigl(\sum_{n \leq N} \frac{1}{n!} \Big(\frac{- 2 \pi^2 i}{h''(t_0)}\Big)^n  \int_{|y| \leq U^{-1}Z^{\varepsilon}} y^{2n} \widehat{g}(y) e^{2 \pi i y t_0} dy  + O_{\delta, N}(XZ^{-\frac{\delta N}{2}+\varepsilon})\Bigr)+ O_{  A }(Z^{-A})
\end{equation*}
for any integer $N$. We choose $N = \lfloor 3 A \delta^{-1} \rfloor$. 
Next we extend the integral to the whole real line without making a new error term, and use
\begin{equation*}
\intR y^{m} \widehat{g}(y) e(y t_0) dy = \leg{-i}{2 \pi}^m g^{(m)}(t_0),
\end{equation*}
which gives
\begin{equation*}
I_1 = \frac{ e^{ih(t_0)}}{\sqrt{h''(t_0)}} \sum_{n \leq 3\delta^{-1} A} \frac{\sqrt{2\pi} e^{\pi i/4}}{n!} \Big(\frac{i}{2 h''(t_0)}\Big)^n  g^{(2n)}(t_0) + O_{\delta, A}(Z^{-A}).
\end{equation*}
This is the desired asymptotic expansion, upon noting that $g^{(m)}(t_0) = G^{(m)}(t_0)$ with $G$ as in \eqref{defG}, 
since $w_0(\frac{t-t_0}{U})$ is identically $1$ in a neighborhood of $t_0$.

To finish the proof, we show that \eqref{eq:pnderiv} holds.  We recall the definition of $H$ in \eqref{defG} and notice that $H^{(j)}(t_0) = 0$ for $j=0,1,2$, and $H^{(j)}(t_0) = h^{(j)}(t_0)$ for $j \geq 3$.  Then we see that $G^{(2n)}(t_0)$ is a sum of (scalar multiples of) terms of the form
\begin{equation*}
w^{(\nu_0)}(t_0) H^{(\nu_1)}(t_0) \dots H^{(\nu_l)}(t_0),
\end{equation*}
where $\nu_0 + \dots + \nu_l = 2n$.  Hence we see that
\begin{equation*}
G^{(2n)}(t_0) \ll X(V^{-2n} + (Q^3/Y)^{-2n/3}),
\end{equation*}
the two extreme cases being $\nu_0 = 2n$, and $\nu_0 = 0$, $\nu_1 = \nu_2 = \dots = \nu_l = 3$.  Then each time we differentiate $G^{(2n)}(t_0)$ with respect to $t_0$ we save either a factor $Q$ or a $V$, and so
\begin{equation*}
\frac{d^j}{d t_0^j} G^{(2n)}(t_0) \ll X(V^{-j} + Q^{-j}) (V^{-2n} + (Q^3/Y)^{-2n/3}).
\end{equation*}
By the easily verifiable formula
\begin{equation*}
\frac{d^{j}}{dx^j} \frac{1}{F(x)} =  \binom{j+1}{j} \sum_{l=0}^{j} \frac{(-1)^l}{1+l} \binom{j}{l} \frac{ \frac{d^j}{dx^j} (F(x)^l)}{F(x)^{1+l}}.
\end{equation*}
we also have that
\begin{equation*}
\frac{d^j}{dt_0^j} \frac{1}{(h''(t_0))^n} \ll Q^{-j} (Q^2/Y)^{n},
\end{equation*}
and \eqref{eq:pnderiv} follows.  \end{proof}

 \end{document}